\documentclass[11pt]{article}

\usepackage[a4paper,margin=1in]{geometry}
\usepackage{amsmath,amssymb,amsthm,mathtools}
\usepackage{enumitem}
\usepackage{hyperref}
\usepackage{setspace}
\usepackage{cite}

\hypersetup{colorlinks=true,linkcolor=blue,urlcolor=blue}

\newcommand{\be}{\begin{equation} \label}
\newcommand{\ee}{\end{equation}}

\newtheorem{theorem}{Theorem}[section]
\newtheorem{proposition}[theorem]{Proposition}
\newtheorem{lemma}[theorem]{Lemma}

\newtheorem{remark}[theorem]{Remark}
\theoremstyle{definition}

\newcommand{\R}{\mathbb R}
\newcommand{\RN}{\mathbb R^N}
\newcommand{\Tmax}{T^*}
\newcommand{\eps}{\varepsilon}
\newcommand{\norm}[1]{\left\|#1\right\|}
\newcommand{\csch}{\operatorname{csch}}
\newcommand{\sech}{\operatorname{sech}}
\newcommand{\arsinh}{\operatorname{arsinh}}
\newcommand{\BUC}{\mathit{BUC}}

\title{\bf Strong gradient absorption \\ and initial decay prevent blow-up} 

\author{Philippe Souplet and Berikbol T. Torebek}

\date{}

\begin{document}
\maketitle

\begin{abstract}
 Consider positive classical solutions of the Cauchy problem 
for $u_t-\Delta u=u^p-\mu|\nabla u|^q$, with sufficiently smooth initial data $u_0$, in the strong absorption regime, 
namely $q\ge p>1$ and $\mu>0$.
It has remained a long standing open problem whether finite
time blow-up can occur if  $u_0$ decays at space infinity.
We here solve this problem, showing that the answer in negative.

To this end, we establish a novel, universal Bernstein-type gradient estimate, of independent interest,
valid only in the range $q\ge p$.
Moreover, the decay assumption on $u_0$ turns out to be essentially optimal.
Indeed we show that the solution blows up in finite time whenever $u_0$ 
has a positive limit at infinity,
and we determine the blow-up behavior of $u$ at space infinity under an additional assumption.
\end{abstract}

\medskip
\noindent\textit{Key words.} Nonlinear heat equation; gradient absorption; global existence; critical exponent

\medskip
\noindent\textit{2020 Mathematics Subject Classification.} 35K58; 35B33; 35A01.

\section{Introduction}

\subsection{The problem}

 We consider nonnegative solutions of the parabolic problem
\be{P0}
\begin{cases}
 u_t-\Delta u=u^p-\mu|\nabla u|^q,& t>0,\ x\in\Omega,\\
u=0,& t>0,\ x\in\partial\Omega,\\
 u(0,x)=u_0(x),&x\in\Omega,
\end{cases}
\ee
where $p,q>1$, {$\mu>0$,}
\be{hypOmega}
\hbox{$\Omega$ is a, possibly unbounded, uniformly smooth domain of $\RN$, $N\ge 1$,}
\ee
 and 
$$u_0\in X_+=\big\{\phi\in BC^1(\overline\Omega);  \phi\ge 0,\ \phi_{|\partial\Omega}=0\big\}.$$
It is known that \eqref{P0} admits a unique, maximal classical solution $u\ge 0$, 
of existence time denoted by $\Tmax=\Tmax(u_0)\in (0,\infty]$
(see section~\ref{LocTheory} below for detailed statements).

Problem \eqref{P0} can be seen as a compound 
of two very classical equations: the Fujita equation from blow-up theory
\be{FujitaEq}
u_t-\Delta u=u^p
\ee
(see \cite[Chapter~II]{QuittnerSouplet2019} for detailed references),
and the diffusive Hamiton-Jacobi equation from stochastic 
optimal control
\be{DHJEq}
u_t-\Delta u+|\nabla u|^q=0
\ee
(see \cite{PLL82,BCD97,FMS06} and \cite[Section~40]{QuittnerSouplet2019}).
Problem \eqref{P0} is significant for the qualitative theory of nonlinear diffusion equations,
as a model for the competition between the fundamental mechanisms 
featured in \eqref{FujitaEq} and \eqref{DHJEq}: a superlinear source and a nonlinear gradient absorption.
Starting with the work \cite{ChipotWeissler1989}, it has stimulated a large amount of research,
especially on such issues as blow-up, global existence, steady states,
critical exponents, description of asymptotic behavior, and analysis of singularities.
A survey of the earlier work can be found in \cite{Souplet01} and in \cite[Sections~36--39]{QuittnerSouplet2019}.
For more recent contributions
we refer, in the parabolic setting, to \cite{Bou25, EZ11, GNZ17, TZ19} (finite time blow-up profiles),
 \cite{FL20, JSP20, Tor23} (Fujita type phenomena),
 \cite{ST07} (global decaying solutions),
 \cite{ZZ19} (free boundary problems),
  \cite{HK15,Ng17,RHKK25} (numerical aspects of blow-up)
 and, concerning steady states, to
\cite{ABGQ16,AGQ13,GQS19} (nonexistence results) and
\cite{BVV23a,BVV23b,BVGV20} (a priori estimates, singularities and boundary traces).

Our main focus here is on the strong absorption case $q\ge p$.
 It is known that $q=p$ plays the role of a critical exponent for blow-up vs.~global existence in {\it bounded} domains.
Namely, when $\Omega$ is bounded, it was proved in \cite{Fila91} that all solutions are global and bounded if $q\ge p$,
whereas, as shown in \cite{SoupletWeissler1997},  finite time blow-up occurs for suitably large initial data if $q<p$
(and the latter conclusion actually holds for any $\Omega$).
Interestingly, the situation is different in the case $\Omega=\R^N$, i.e. for
\begin{equation}
\label{P}
\begin{cases}
 u_t-\Delta u=u^p-\mu|\nabla u|^q,
& t>0,\ x\in\R^N,\\
 u(0,x)=u_0(x), &x\in\R^N.
\end{cases}
\end{equation}
Indeed, when $q\ge p$, the existence of {\it global unbounded} solutions of \eqref{P} for
sufficiently large initial with compact support was established in \cite{SoupletWeissler1999}.
On the other hand, whatever the value of $q\ge 1$, \eqref{P} always has special finite time blow-up solutions,
namely the flat solutions, corresponding to $u_0(x)=c>0$
(since the latter solve the ODE $y'=y^p$).
However, {in spite of the substantial activity on this problem}, the following has remained a long standing open question:
\be{OP}
\hbox{Can finite time blow-up for problem \eqref{P} occur if $q\ge p$ and $u_0$ decays at space infinity~?}
\ee
{See \cite{Souplet01} and \cite[Open Problem~4.1 p.~658]{QuittnerSouplet2019}.}
The first goal of this paper 
is solve this question, and it will turn out that the answer in negative.
This will be a consequence of a new, {\it universal gradient estimate} for \eqref{P},
of independent interest,
valid only in the range $q\ge p$.

\subsection{Main results}

Throughout,  we set
$$C_0(\Omega)=\Big\{\phi\in C(\overline\Omega);\  \phi_{|\partial\Omega}=0\ \ \hbox{and}\ \lim_{|x|\to\infty} \phi(x)=0\Big\}.$$
Our first main result completely solves the open problem \eqref{OP}.
Moreover the result holds for any domain.

\begin{theorem}[Global existence]
\label{thm:main-BUC}
Assume \eqref{hypOmega} and
\be{range-main}
q\ge p>1,\qquad\mu>0.
\ee
Then for any $u_0\in {X_+\cap C_0(\Omega})$,
the maximal solution of \eqref{P0} is global i.e., $\Tmax=\infty$.
\end{theorem}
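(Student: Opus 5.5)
The plan is to derive global existence from a universal Bernstein-type gradient bound valid only for $q\ge p$, of the form
\[
|\nabla u(t,x)|\le C\big(u^{(p+1)/2}(t,x)+\text{lower order}\big)\quad\text{or, more precisely,}\quad \mu|\nabla u|^q\ge u^p-C\ \text{where $u$ is large},
\]
and then to use the decay of $u_0$ at infinity to rule out blow-up. The point is that if $|\nabla u|$ controls $u^{p/q}$ from below whenever $u$ is large, then a solution which is small at spatial infinity cannot develop a high, flat plateau: large values of $u$ force large gradients. By the absorption term, such gradients then dominate the source.

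First step. For $w=|\nabla u|^2$ I would compute the Bernstein equation and work with an auxiliary function of the type $z=|\nabla u|^2-\lambda F(u)$ or $z=|\nabla u|^2\varphi(u)$, with $\varphi$ chosen so that the $q$-homogeneous absorption term produces a good sign. The relation $q\ge p$ should be exactly what makes the cross term $pu^{p-1}|\nabla u|^2$ absorbable by $\mu q|\nabla u|^{q-2}\nabla u\cdot\nabla w$-type contributions. I would aim to prove the estimate
\[
|\nabla u|\le C(p,q,\mu,N)\,\big(1+u^{p/q}\big)\quad\text{in }(\tau,T^*)\times\Omega,
\]
with the constant independent of $u_0$. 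To get this, I would localize with a cutoff in space and time and use the maximum principle. Near $\partial\Omega$ I would use that $u=0$ there and that the domain is uniformly smooth, so the boundary gradient is controlled via barriers; it is enough to know that $\nabla u$ is bounded on $\partial\Omega$ for $t<T^*$ by a universal quantity.

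Second step (the key, and main obstacle). A universal upper bound on $|\nabla u|$ alone does not prevent blow-up; flat data show this. So I would need a lower-type mechanism instead. Since $u_0\in C_0$, for $t$ small $u(t,\cdot)\to0$ at infinity; I would show this persists for all $t<T^*$ by comparison with the solution of $v_t-\Delta v=v^p$ on exterior regions, which stays small for short times. Then I would use the gradient bound integrated along rays from infinity: $u^{1-p/q}$ (or $\log u$ when $q=p$) is Lipschitz with a universal constant. This gives $u(t,x)\le$ a function of $\operatorname{dist}(x,\{u(t)\le1\})$ and so prevents $\|u(t)\|_\infty$ from blowing up in finite time without first spreading. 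Concretely, blow-up at time $T^*$ at some point would, via the Lipschitz bound on $\log u$ or $u^{1-p/q}$, force $u(t)$ to be large on balls of radius $R(t)\to\infty$. This contradicts the decay at infinity, which is uniform on $[0,T^*)$ thanks to comparison with a supersolution $\|u_0\|$-independent tail estimate. I expect the hardest part to be making this tail control uniform up to $T^*$. I would try the comparison $u\le e^{t\Delta}u_0+\int_0^t e^{(t-s)\Delta}u^p\,ds$ combined with an $L^\infty$ bound on large balls from the first step.

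Third step. Once $\|u(t)\|_\infty$ is bounded on $[0,T^*)$, the local theory of section~\ref{LocTheory} (the blow-up alternative in $BC^1$, together with the gradient bound) yields $T^*=\infty$.
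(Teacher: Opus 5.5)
Your proposal has a genuine gap: Step 2, the control of the spatial tails uniformly on $[\tau,\Tmax)$, which you rightly call the hardest part, is not resolved, and the mechanisms you suggest fail.

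\textbf{Why the suggested routes fail.} Comparing with $v_t-\Delta v=v^p$ on an exterior region $\{|x|>R\}$ needs an upper bound for $u$ on $\{|x|=R\}$ up to $\Tmax$, and that is exactly what is unknown. The Duhamel bound $u\le e^{t\Delta}u_0+\int_0^te^{(t-s)\Delta}u^p\,ds$ needs an $L^\infty$ bound on balls. In your scheme that bound comes from the tail control via the Lipschitz estimate, so the argument is circular. A continuity argument does not close either: the interior bound $e^{CR}$ grows with $R$, so the Duhamel term only returns smallness on a strictly larger exterior region.

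\textbf{The missing idea.} The paper passes to $V=1-e^{-u}$, for which
\[
 V_t-\Delta V=e^{-u}\bigl(u^p-\mu|\nabla u|^q+|\nabla u|^2\bigr).
\]
The factor $e^{-u}$ neutralizes whatever happens where $u$ is large. Suppose $|\nabla u|^2\le M(u^a+u^b)$ on $(\tau,\Tmax)\times\RN$ with $a,b\ge1$. Then $e^{-s}s^r\le K_r(1-e^{-s})$ for $r\ge1$ gives the linear inequality $V_t-\Delta V\le CV$, with $0\le V<1$. Hence $V(t)\le e^{C\Tmax}e^{(t-\tau)\Delta}V(\tau)$, and the tails are uniformly small up to $\Tmax$ by Lemma~\ref{lem:heat-tail}. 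No information from the blow-up region is needed.

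\textbf{What this requires of Step 1.} The argument needs a gradient estimate that degenerates at $u=0$ at least like $|\nabla u|^2\lesssim u$. Your target $|\nabla u|\le C(1+u^{p/q})$ does not. Where $u$ is small it gives only $|\nabla u|\le 2C$, hence $V_t-\Delta V\le CV+C$, which yields no decay at infinity.

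This is why the paper uses $Z=\arsinh(u^\alpha)$ with $\alpha<\frac12$ rather than a pure power or a logarithm. The Bernstein argument for $W=|\nabla Z|^2$ yields $|\nabla u|^2\le L(t)(u^{2-2\alpha}+u^2)$. The exponent $2-2\alpha>1$ makes the small-$u$ behaviour compatible with the $V$-argument; that regime is controlled by the coercive weight $D(Z)=A\csch^2Z-\sech^2Z$, which is positive because $A=(1-\alpha)/\alpha>1$. The $u^2$ part gives the $\log u$-Lipschitz bound, which is then used exactly as in your ray argument.

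\textbf{Two smaller points.}
\begin{enumerate}
\item The lower bound $\mu|\nabla u|^q\ge u^p-C$ in your first display cannot hold. At a spatial maximum $\nabla u=0$, so it would by itself bound $\norm{u(t)}_\infty$. It thus presupposes the conclusion, and it is contradicted by the global unbounded solutions known to exist for $q\ge p$.
\item For general $\Omega$ you would need a universal boundary gradient estimate. That is hard to prove, since barrier arguments for $\partial_\nu u$ require an $L^\infty$ bound unavailable up to $\Tmax$. The paper avoids it: the estimate is proved only on $\RN$, and one compares $0\le u\le\bar u$, where $\bar u$ is the whole-space solution issued from the zero extension of $u_0$, global by the $\RN$ case.
\end{enumerate}
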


Our second main result is a 
universal gradient estimate in the whole range $q\ge p$ for $\Omega=\R^N$,
and it is the main ingredient in the proof of Theorem~\ref{thm:main-BUC}.

\begin{theorem}[Universal gradient estimate]
\label{thm:universal}
Assume \eqref{range-main} and let $0<\alpha<\frac12$.
For all $u_0\in {X_+}$, the maximal solution of \eqref{P} satisfies
\begin{equation}
\label{grad-u}
 |\nabla u|^2
 \le L(t)\bigl(u^{2-2\alpha}+u^2\bigr),\quad 0<t<\Tmax,\ x\in\RN,
\end{equation}
with $L(t)=B\bigl(1+t^{-1/\sigma}\bigr)$, where $\sigma=\min(1,\frac q2)$ and the constant $B>0$ depends only on $p,q,\mu,\alpha$.
\end{theorem}

Our last main result shows that the hypothesis $u_0\in C_0(\Omega)$
in Theorem~\ref{thm:main-BUC} is essentially sharp.
Namely consider the case when $\Omega=\R^N$ and 
\begin{equation}
\label{u0-limit}
 \lim_{|x|\to\infty}u_0(x)=\ell>0.
\end{equation}
We show that $u$ then blows up in finite time, and we determine the blow-up profile at space infinity under
the additional assumption that $u_0\le \ell$.
We stress that this actually holds for all $q>1$.
Here, for $s>0$, we denote
by $T_s:=\frac{s^{1-p}}{p-1}$,
the blow-up time of the solution of the corresponding ODE: $\zeta'=\zeta^p$ with $\zeta(0)=s$.

\begin{theorem}
\label{thm:asymptotically-constant}
Assume $p,q>1$, $\mu\ge 0$ and let $u_0\in {X_+}$ 
satisfy \eqref{u0-limit}.
Let $u$ be the maximal solution of \eqref{P}.
\begin{enumerate}[label=\textup{(\alph*)},leftmargin=*]
\item[(i)]
Then {$\Tmax<\infty$} and, more precisely,
$T_{\norm{u_0}_\infty}\le\Tmax\le T_\ell$.

\item[(ii)]
The solution $u$ satisfies
\be{evolving-tail}
\lim_{|x|\to\infty}u(t,x)
 =\bigl(\ell^{1-p}-(p-1)t\bigr)^{-1/(p-1)},
 \qquad 0\le t<\Tmax.
\ee
In particular, if $u_0\le \ell$,
then $\Tmax=T_\ell$ and the right hand side of 
\eqref{evolving-tail} blows up as $t\to \Tmax$.
\end{enumerate}
\end{theorem}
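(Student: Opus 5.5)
We prove Theorem~\ref{thm:asymptotically-constant} by comparison with spatially constant solutions, combined with a translation/compactness argument at space infinity. Write $\zeta_s(t)=\bigl(s^{1-p}-(p-1)t\bigr)^{-1/(p-1)}$ for the ODE solution. Since constants have zero gradient, $\zeta_s$ solves \eqref{P} for every $\mu\ge0$ and every $q$. Because $0\le u_0\le\norm{u_0}_\infty$, the comparison principle for \eqref{P} (valid for classical solutions with bounded gradient, from the local theory) gives $u\le\zeta_{\norm{u_0}_\infty}$. Hence $\Tmax\ge T_{\norm{u_0}_\infty}$. If $u_0\le\ell$, the same comparison gives $u\le\zeta_\ell$ on $[0,\Tmax)$. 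The upper bound $\Tmax\le T_\ell$, together with (ii), will come from the behavior at infinity.

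\textbf{Key step for (ii): the limit at infinity.} Fix $T<\Tmax$, so that $u$ and $\nabla u$ are bounded on $[0,T]\times\RN$. Take a sequence $|x_k|\to\infty$ and set $u_k(t,x)=u(t,x+x_k)$. These functions solve \eqref{P} with data $u_0(\cdot+x_k)$, which converge to $\ell$ locally uniformly. Parabolic regularity gives uniform H\"older bounds for $u_k$, $\nabla u_k$ and, in the interior, for $D^2u_k$ and $\partial_t u_k$. Up to a subsequence, $u_k\to v$ in $C^{1,2}_{loc}((0,T]\times\RN)$ and locally uniformly up to $t=0$, with $v(0)=\ell$. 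The bounds at $t=0$ need care; I would use the mild formulation with $\nabla e^{t\Delta}$ estimates to get continuity in $C^1$ near $t=0$. The limit $v$ is then a bounded solution of \eqref{P} with constant initial data $\ell$. By uniqueness of bounded classical solutions with bounded gradient, $v=\zeta_\ell$ on $[0,T]$; in particular this requires $T<T_\ell$. Since the limit does not depend on the subsequence, $u(t,x)\to\zeta_\ell(t)$ as $|x|\to\infty$ for each $t\le T$. This proves \eqref{evolving-tail}.

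\textbf{Upper bound on $\Tmax$.} Suppose $\Tmax>T_\ell$. Choose $T\in(T_\ell,\Tmax)$. Then $u$ is bounded on $[0,T]$, and the compactness argument above produces a bounded limit $v$ on $[0,T]$ with initial data $\ell$. By uniqueness, $v$ coincides with $\zeta_\ell$ on $[0,T_\ell)$, and $\zeta_\ell$ is unbounded there, which is a contradiction. Hence $\Tmax\le T_\ell$. When $u_0\le\ell$ we have $\norm{u_0}_\infty=\ell$ (because $u_0\to\ell$ at infinity), so the two bounds give $\Tmax=T_\ell$. At that time the right hand side of \eqref{evolving-tail} blows up.

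\textbf{Main obstacle.} The delicate point is the compactness near $t=0$: one needs the uniform $BC^1$ bounds on $[0,T]$ from the local theory and continuity of the data map in a topology strong enough for uniqueness of the limit problem. I would handle this by writing $u_k$ through the Duhamel formula. The gradient term $|\nabla u_k|^q$ is bounded, so the difference $u_k-e^{t\Delta}u_0(\cdot+x_k)$ is $O(t)$ uniformly in $k$, and $e^{t\Delta}u_0(\cdot+x_k)\to\ell$ locally uniformly. Passing to the limit in the Duhamel formula, with local convergence of the gradients and dominated convergence for the tails of the heat kernel, shows that $v$ is a mild bounded solution with data $\ell$, hence equal to $\zeta_\ell$.
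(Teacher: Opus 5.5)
Your proof is correct, but it handles the key step (ii) differently from the paper.

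\textbf{The paper's route.} The paper proves Proposition~\ref{lem:C0}. It writes $w=u-\zeta_\ell$ and runs a contraction argument for the Duhamel equation of $w$ directly in $L^\infty(0,\tau;C^1_0(\RN))$. This uses that $e^{t\Delta}$ maps $C_0$ into $C^1_0$, via Lemma~\ref{lem:C0Lem}, together with finitely many restarts. It then identifies the fixed point with $w$ by uniqueness, which gives $u(t)-\zeta_\ell(t)\in C_0(\RN)$. From $\norm{u(t)}_\infty\ge\zeta_\ell(t)$ it reads off $T^*\le T_\ell$ immediately. The lower bound comes from comparison with $\zeta_{\norm{u_0}_\infty}$, exactly as you do.

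\textbf{Your route.} You translate along $|x_k|\to\infty$, extract a limit by Arzel\`a--Ascoli, identify it through the Duhamel formula, and conclude by uniqueness for constant data. This is a rigidity argument, and you use it both for \eqref{evolving-tail} and for $T^*\le T_\ell$.

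\textbf{What each buys.} Your approach avoids building a fixed point in a $C^1_0$-valued space and avoids the gradient-decay lemma. It also gives, for free, convergence uniform in $t\in[0,T]$. It costs two things:
\begin{itemize}
\item translation-invariant H\"older bounds for $\nabla u$ on $[t_0,T]\times\RN$, which the local theory supplies;
\item a uniqueness statement for a limit $v$ about which you only know that $v$ and $\nabla v$ are bounded and the Duhamel formula holds.
\end{itemize}
For the second, either run a singular Gronwall argument on $v-\zeta_\ell$, using $|\nabla v|^q\le\norm{\nabla v}_\infty^{q-1}|\nabla(v-\zeta_\ell)|$, or check that $v\in C([0,T];BC^1)$. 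The latter follows from $\norm{\nabla v(t)}_\infty\le Ct^{1/2}$ and the interior bounds, so that Proposition~\ref{cor:comparison} applies. The paper's route stays entirely inside the semigroup framework of the local theory and needs neither compactness nor higher regularity.

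\textbf{A small point.} Passing from $u\le\zeta_{\norm{u_0}_\infty}$ to $T^*\ge T_{\norm{u_0}_\infty}$ uses the $L^\infty$ blow-up alternative, Proposition~\ref{BUalt}. That is exactly where $\mu\ge 0$ enters. Constants solve the equation for every real $\mu$, so $\mu\ge0$ is not needed for $\zeta_s$ to be a solution.
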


\subsection{Remarks and discussion}

\begin{remark}
 \emph{(Bounded vs.~unbounded global solutions)}  
Under assumption \eqref{range-main}, it was shown
 in \cite{SoupletWeissler1999,Souplet99} (see also \cite[Theorem~36.4]{QuittnerSouplet2019})
that all solutions of problem \eqref{P0} with $u_0\in X_+\cap C_0(\overline\Omega)$
are global and bounded 
 if and only if $\Omega$ has finite inradius, i.e.
$$\rho(\Omega):=\sup\big\{R>0;\ \hbox{$\Omega$ contains a ball of radius $R$}\big\}<\infty.$$
In particular, in the case $\rho(\Omega)=\infty$,  \cite[Theorem~2]{SoupletWeissler1999}
and \cite[Theorem~36.4]{QuittnerSouplet2019}
guarantee the existence of $u_0\in X_+\cap C_0(\overline\Omega)$ such that
$u$ becomes unbounded either in infinite or finite time.
Theorem~\ref{thm:main-BUC} rules out the latter possibility.
\end{remark}

\begin{remark}
\label{rem:comments}
 \emph{(Specificity and meaning of \eqref{grad-u})}  
\begin{enumerate}[label=\textup{(\alph*)},leftmargin=*]
\item Estimate  \eqref{grad-u} is valid {\bf only} in the range $q\ge p$.
 In fact for $p>q$, by \cite{SoupletWeissler1997}, finite time blow-up occurs for all sufficiently large smooth $u_0$ with compact support.
 For any such solution and any $\alpha\in(0,1/2]$, it follows from Lemma~\ref{prop:tails2}  below that
$$
 \sup_{(\tau,\Tmax)\times{\R}^N} \,\frac{|\nabla u|^2}{u^2+u^{2-2\alpha}}=\infty,
 \quad 0<\tau<\Tmax,
$$
so that even a weaker (nonuniversal) estimate of the form \eqref{grad-u} cannot hold in this range.

\item The bound \eqref{grad-u} does not involve $u_0$ in any way and does not involve $N$ either.
It can be recast under the form
\begin{equation}
\label{universal}
  \bigl|\nabla\arsinh(u^\alpha)(t,x)\bigr|^2
 \le \alpha^2 B\bigl(1+t^{-1/\sigma}\bigr).
 \end{equation}
Writing $Z=\arsinh(u^\alpha)$ and using
$|\nabla Z|=\frac{|\nabla u^\alpha|}{\sqrt{1+u^{2\alpha}}}$,
estimate \eqref{universal} gives, for $t\ge\tau>0$,
\[
 |\nabla u^\alpha|^2\le2\alpha^2L(\tau)
 \quad\text{on }\{u\le1\},
 \qquad
 |\nabla\log u|^2\le2L(\tau)
 \quad\text{on }\{u\ge1\}.
\]
The transform is thus power-like where the solution is small and logarithmic where it is large, and the estimate is a Lipschitz bound in the corresponding variable.
 We refer to Remark~\ref{rem:where} for an explanation of the role of the hypotheses on the parameters
in the proof and applications of estimate \eqref{grad-u}.
\end{enumerate}
\end{remark}

\begin{remark} \label{AsymptConst}
 \emph{(Asymptotically constant initial data)}
\begin{enumerate}[label=\textup{(\alph*)},leftmargin=*]
\item
For asymptotically constant data $u_0\in {X_+}$, i.e.~$\lim_{|x|\to\infty}u_0(x)=\ell\ge 0$, the combination of 
Theorems~\ref{thm:main-BUC} and \ref{thm:asymptotically-constant} 
leads to the dichotomy $\Tmax=\infty\Leftrightarrow\ell=0$.

\item By \eqref{evolving-tail}, the state of the solution at spatial infinity evolves exactly according to the scalar equation $\zeta'=\zeta^p$: the gradient absorption has no effect there.
We refer to \cite{GU06,Shi08} for related results on blow-up at space infinity
with asymptotically constant initial data for equations of the form
$u_t-\Delta u=f(u)$.

\item Assertion (i) of Theorem~\ref{thm:asymptotically-constant} remains true if assumption \eqref{u0-limit} is weakened to 
$\ell:=\liminf_{|x|\to\infty}u_0(x)>0$.
See Remark~\ref{JustifClaim}. 

\item The assumption $\mu\ge 0$ in Theorem~\ref{thm:asymptotically-constant} is only used for the proof of $T_{\norm{u_0}_\infty}\le\Tmax$
and of the second part of assertion (ii). The other conclusions are valid also for $\mu<0$.
\end{enumerate}
\end{remark}

\begin{remark}[A model barrier on bounded domains]
\label{rem:barrier}

The condition $q\ge p$ makes the absorption dominate the source wherever the solution is steep relative to its size.  A simple way to see this is the following barrier.  For
$k:=\mu^{-1/q}$ and $M\ge1$,
the function $U(t,x)=M\exp(k^2t+kx_1)$ satisfies
$U_t=\Delta U=k^2U$ and $|\nabla U|=kU$,
hence
\[
 U_t-\Delta U-U^p+\mu|\nabla U|^q
 =U^q-U^p\ge0
 \qquad\text{wherever }U\ge1 .
\]
After a translation in $x_1$, one has $U\ge1$ on any given bounded domain, where this barrier dominates the solution of the homogeneous Dirichlet problem for all times and rules out blow-up;  related mechanisms
are used in \cite{Fila91} and \cite{SoupletWeissler1999}. In $\RN$ no translate of $U$ stays above $1$, since $U\to0$ as $x_1\to-\infty$.  The difficulty is therefore to control the solution uniformly near spatial infinity, and we achieve this through a gradient estimate that does not depend on the initial datum.
\end{remark}

\subsection{Strategy}

Theorem~\ref{thm:universal} is proved by the Bernstein technique applied to the hybrid transform $Z=\arsinh(u^\alpha)$.  The equation for $W=|\nabla Z|^2$ contains a coercive term $-(D(Z)+S'(Z))W^{1+\sigma}$, in which the weight $D$ comes from the diffusion and controls small $u$, while the weight $S'$ comes from the gradient absorption and controls large $u$.  For $W$ large this term dominates the contribution $R'(Z)W$ of the source, \emph{uniformly in $Z$}; this uniformity is where $q\ge p$ enters, and it is what makes the resulting bound independent of the datum.  Comparison with a spatially homogeneous barrier decreasing from $+\infty$ at $t=0$ then gives \eqref{universal}.  The singularity of the transform at $u=0$ is removed by a positive-floor approximation.

 As for the proof of Theorem~\ref{thm:main-BUC}, the idea is as follows.
 In the case $\Omega=\RN$, for $t\ge\tau>0$, 
estimate \eqref{grad-u} bounds $|\nabla u|^2$ by powers of $u$ with exponents at least one.
 From this one can show that $V=1-e^{-u}$ satisfies $V_t-\Delta V\le CV$, and its spatial tails are controlled by the heat semigroup, uniformly on bounded time intervals.  So $u\le1$ outside a fixed ball up to the putative blow-up time.  Inside the ball, the Lipschitz bound \eqref{universal} converts this into an $L^\infty$ bound.
 For general $\Omega$, the same $L^\infty$ bound follows from the case $\Omega=\RN$ and a comparison argument. Since, by the known blow-up alternative for $\mu>0$, blow-up has to occur in $L^\infty$ norm,
we reach a contradiction.

Finally, the proof of Theorem~\ref{thm:asymptotically-constant}  is based on the preservation of spatial limits,
obtained by semigroup estimates and a fixed point argument on suitable function spaces,
and on comparison arguments.

\subsection{Organization}

Section~\ref{sec:universal} introduces the hybrid transform and proves Theorem~\ref{thm:universal}.  
 Theorems~\ref{thm:main-BUC} and \ref{thm:asymptotically-constant}
are proved in Sections~\ref{sec:global} and \ref{sec:dichotomy}, respectively.
A number of known facts concerning the 
heat semigroup, the local theory and comparison principles
 are collected in Appendix~\ref{sec:prelim}.

\goodbreak

\section{The hybrid transform and the universal estimate}
\label{sec:universal}

Throughout this section we assume \eqref{range-main} and fix
\begin{equation*}
\label{alpha}
 0<\alpha<\textstyle\frac12 .
\end{equation*}
The power transform $u^{(q-p)/q}$, which is natural when $q>p$, degenerates on the diagonal $q=p$.  We therefore use a transform which is power-like for small $u$ and logarithmic for large $u$, and which requires no relation between $\alpha$ and the exponents:
\begin{equation}
\label{yZ}
 y:=u^\alpha,
 \qquad
 Z:=\arsinh y=\arsinh(u^\alpha),
 \qquad
 W:=|\nabla Z|^2 .
\end{equation}
Set
\begin{equation*}
\label{parameters}
 A:=\frac{1-\alpha}{\alpha}>1,
 \qquad
 \gamma:=1+\frac{p-1}{\alpha},
 \qquad
 \delta:=\frac{(q-1)(1-\alpha)}{\alpha}>0,
\end{equation*}
and, for $Z>0$,
$$
\begin{aligned}
\label{coeff-a}
 a(Z)&:=\tanh Z+A\coth Z,\\
 R(Z)&:=\alpha\frac{(\sinh Z)^\gamma}{\cosh Z},\\
 S(Z)&:=\mu\alpha^{1-q}
 (\sinh Z)^\delta(\cosh Z)^{q-1},\\
 D(Z)&:=-a'(Z)=A\csch^2Z-\sech^2Z .
\end{aligned}
$$
For $q=p$ these reduce to the exponents of the diagonal case; the point of this section is that no case distinction is needed.

\subsection{Equation for the hybrid transform}

\begin{lemma}[Equation for $Z=\arsinh(u^\alpha)$]
\label{lem:transform}
Let $u$ be a positive classical solution of $u_t-\Delta u=u^p-\mu|\nabla u|^q$ in an open set $Q\subset(0,\infty)\times\RN$, and let $y,Z,W$ be given by \eqref{yZ}.  Then, in $Q$,
\begin{align}
\label{yeq}
 y_t-\Delta y
 &=A\frac{|\nabla y|^2}{y}
 +\alpha y^\gamma
 -\mu\alpha^{1-q}y^\delta|\nabla y|^q,\\
\label{Zeq}
 Z_t-\Delta Z
 &=a(Z)W+R(Z)-S(Z)W^{q/2}.
\end{align}
\end{lemma}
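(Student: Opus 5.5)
The plan is a direct chain-rule computation in two stages: first pass from $u$ to $y=u^\alpha$, then from $y$ to $Z=\arsinh y$. At each stage we use the elementary identity for a $C^{2,1}$ change of variables $v=\phi(w)$:
\[
v_t-\Delta v=\phi'(w)(w_t-\Delta w)-\phi''(w)|\nabla w|^2 .
\]
Since $u>0$ in $Q$, both transforms are smooth there, so classical regularity of $u$ is enough.

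\emph{Stage 1.} Write $u=y^{1/\alpha}$, so that
\[
u_t-\Delta u=\tfrac1\alpha y^{\frac1\alpha-1}(y_t-\Delta y)-\tfrac1\alpha\bigl(\tfrac1\alpha-1\bigr)y^{\frac1\alpha-2}|\nabla y|^2 .
\]
Substitute $u^p=y^{p/\alpha}$ and $|\nabla u|^q=\alpha^{-q}y^{q(\frac1\alpha-1)}|\nabla y|^q$, then multiply by $\alpha y^{1-\frac1\alpha}$. This gives
\begin{itemize}
\item the coefficient $A=\frac{1-\alpha}\alpha$ in front of $|\nabla y|^2/y$;
\item the power $y^{1+\frac{p-1}\alpha}=y^\gamma$ with coefficient $\alpha$;
\item the power $y^{(q-1)(\frac1\alpha-1)}=y^\delta$ with coefficient $\mu\alpha^{1-q}$.
\end{itemize}
This is \eqref{yeq}.

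\emph{Stage 2.} Set $y=\sinh Z$, so that $\nabla y=\cosh Z\,\nabla Z$, $|\nabla y|^2=\cosh^2Z\,W$, and
\[
y_t-\Delta y=\cosh Z\,(Z_t-\Delta Z)-\sinh Z\,W .
\]
Insert this into \eqref{yeq} and divide by $\cosh Z$ to get
\[
Z_t-\Delta Z=\Bigl(\tanh Z+A\frac{\cosh Z}{\sinh Z}\Bigr)W+\alpha\frac{\sinh^\gamma Z}{\cosh Z}-\mu\alpha^{1-q}\sinh^\delta Z\,\cosh^{q-1}Z\,W^{q/2},
\]
where we used $|\nabla y|^q=\cosh^qZ\,W^{q/2}$. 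The three coefficients are exactly $a(Z)$, $R(Z)$ and $S(Z)$, so this is \eqref{Zeq}. As a consistency check, $D=-a'$ is immediate from $(\tanh)'=\sech^2$ and $(\coth)'=-\csch^2$, though the lemma itself does not need it.

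There is no real obstacle here. The only point requiring care is the exponent bookkeeping in Stage 1, namely checking that
\[
\tfrac p\alpha+1-\tfrac1\alpha=\gamma
\qquad\text{and}\qquad
q\bigl(\tfrac1\alpha-1\bigr)+1-\tfrac1\alpha=\delta .
\]
I would verify both explicitly.
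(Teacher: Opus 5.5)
Your proposal is correct and follows the paper's proof essentially verbatim. It uses the same two-stage chain-rule computation ($u\to y=u^\alpha$, then $y=\sinh Z$ with division by $\cosh Z$), and your exponent identities for $\gamma$ and $\delta$ check out; the only cosmetic difference is that in Stage 1 you write $u=y^{1/\alpha}$ and multiply through by $\alpha y^{1-1/\alpha}$, whereas the paper differentiates $y=u^\alpha$ directly and substitutes $\nabla u=\alpha^{-1}u^{1-\alpha}\nabla y$.
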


\begin{proof}
From $y=u^\alpha$,
\[
 y_t-\Delta y
 =\alpha u^{\alpha-1}(u_t-\Delta u)
 +\alpha(1-\alpha)u^{\alpha-2}|\nabla u|^2,
 \qquad
 \nabla u=\alpha^{-1}u^{1-\alpha}\nabla y .
\]
Hence:
\begin{itemize}[leftmargin=*]
\item $\alpha(1-\alpha)u^{\alpha-2}|\nabla u|^2
 =A\,u^{-\alpha}|\nabla y|^2=A|\nabla y|^2/y$;
\item $\alpha u^{\alpha+p-1}
 =\alpha y^{(\alpha+p-1)/\alpha}=\alpha y^\gamma$;
\item $\mu\alpha u^{\alpha-1}|\nabla u|^q
 =\mu\alpha^{1-q}u^{(q-1)(1-\alpha)}|\nabla y|^q
 =\mu\alpha^{1-q}y^\delta|\nabla y|^q$,
\end{itemize}
which is \eqref{yeq}.  Next, $y=\sinh Z$ gives
\[
 \nabla y=(\cosh Z)\,\nabla Z,
 \qquad
 y_t-\Delta y=(Z_t-\Delta Z)\cosh Z-W\sinh Z .
\]
Substituting into \eqref{yeq}, using
\[
 \frac{|\nabla y|^2}{y}=\frac{\cosh^2Z}{\sinh Z}\,W,
 \qquad
 |\nabla y|^q=(\cosh Z)^qW^{q/2},
\]
and dividing by $\cosh Z$ gives \eqref{Zeq}.
\end{proof}

\subsection{Structure of the coefficients}

The following lemma is the only place where $q\ge p$ is used.

\begin{lemma}[Coercive coefficient structure]
\label{lem:coeff}
The functions $D$, $R'$ and $S'$ are positive on $(0,\infty)$.  Moreover there exist constants $K>0$ and $g_0>0$, depending only on $p,q,\mu,\alpha$, such that
\begin{equation}
\label{coeff-bounds}
 R'(Z)\le K\bigl(D(Z)+S'(Z)\bigr),
 \qquad
 D(Z)+S'(Z)\ge g_0,
 \qquad Z>0 .
\end{equation}
\end{lemma}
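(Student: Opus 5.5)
The plan is to compute $D$, $R'$ and $S'$ in closed form, read off positivity from the formulas, and then obtain both inequalities in \eqref{coeff-bounds} by comparing the behaviour of the three functions as $Z\to0^+$ and as $Z\to\infty$. On the remaining compact middle range, continuity and positivity suffice. Throughout I write $s=\sinh Z$ and $c=\cosh Z$.

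\emph{Step 1: formulas and positivity.} First,
$$D(Z)=\frac{A}{s^2}-\frac1{c^2}=\frac{Ac^2-s^2}{s^2c^2}>0,$$
because $A>1$ and $c>s$. Next,
$$R'(Z)=\alpha\, s^{\gamma-1}\,\frac{\gamma c^2-s^2}{c^2}>0,$$
because $\gamma>1$. Finally,
$$S'(Z)=\mu\alpha^{1-q}\, s^{\delta-1}c^{q-2}\bigl(\delta c^2+(q-1)s^2\bigr)>0,$$
because $\delta>0$ and $q>1$.

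\emph{Step 2: behaviour near $0$.} As $Z\to0^+$ we have $D(Z)\sim A Z^{-2}\to\infty$. Since $\gamma>1$, $R'(Z)\sim\alpha\gamma Z^{\gamma-1}$, so $R'$ stays bounded on $(0,1]$. Hence $R'/(D+S')\le R'/D\to0$ as $Z\to0^+$, and in addition $D+S'\to\infty$ there.

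\emph{Step 3: behaviour near $\infty$.} As $Z\to\infty$ we have $s,c\sim e^Z/2$. This gives
$$R'(Z)\sim\alpha(\gamma-1)2^{1-\gamma}e^{(\gamma-1)Z}=C_R\,e^{(p-1)Z/\alpha}.$$
Similarly $S'(Z)\sim C_S\,e^{(\delta+q-1)Z}$, and $\delta+q-1=(q-1)/\alpha$, so
$$S'(Z)\sim C_S\,e^{(q-1)Z/\alpha}.$$
Here $C_R,C_S>0$ depend only on $p,q,\mu,\alpha$. (By contrast, $D(Z)\sim4(A-1)e^{-2Z}$ is negligible at infinity.) This is the one point where $q\ge p$ enters. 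Because $q\ge p$, the ratio $R'/S'$ tends to $0$ if $q>p$ and to the finite constant $C_R/C_S$ if $q=p$. So in either case it stays bounded on $[Z_1,\infty)$ for some $Z_1$. Moreover $S'\to\infty$, since $q>1$.

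\emph{Step 4: conclusion.} On the compact interval $[Z_0,Z_1]$ the functions $R'$ and $D+S'$ are continuous and $D+S'>0$. Hence the ratio $R'/(D+S')$ is bounded there and $D+S'$ attains a positive minimum. Combining this with Steps 2 and 3 gives a finite $K=\sup_{Z>0}R'/(D+S')$. It also gives $g_0=\inf_{Z>0}(D+S')>0$: this infimum is attained, because $D+S'$ is continuous, positive, and tends to $+\infty$ at both ends. All quantities involved depend only on $p,q,\mu,\alpha$.

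The only delicate step is Step 3 in the diagonal case $q=p$. There $R'$ and $S'$ grow at exactly the same exponential rate, so one needs the precise leading constants rather than mere exponents. This is why I keep the explicit asymptotics rather than cruder bounds.
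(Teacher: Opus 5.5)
Your proof is correct and follows the paper's own structure: explicit formulas give positivity, $D$ blows up at $0$ while $R'\to0$, $q\ge p$ is used to compare the growth of $R'$ and $S'$ at infinity, and continuity handles the middle range. The only difference is that at infinity you use asymptotic equivalents, which the paper records separately in Remark~\ref{rem:ratio}, whereas the paper's proof uses the explicit bounds $R'\le\alpha\gamma(\sinh Z)^{\gamma-1}$ and $S'\ge(q-1)\tanh(1)\,\mu\alpha^{1-q}(\sinh Z)^{(q-1)/\alpha}$ on $[1,\infty)$; these cruder bounds already suffice when $q=p$, since boundedness of $R'/S'$ needs only matching exponents and not the leading constants, so your closing remark overstates the difficulty of that case.
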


\begin{proof}
\emph{Positivity.}  Since $A>1$,
\[
 D(Z)=\frac{A\cosh^2Z-\sinh^2Z}{\sinh^2Z\cosh^2Z}>0 .
\]
Next,
\begin{equation*}
\label{Rprime}
 R'(Z)
 =\alpha(\sinh Z)^{\gamma-1}
 \frac{\gamma+(\gamma-1)\sinh^2Z}{\cosh^2Z},
\end{equation*}
so that, using $\gamma+(\gamma-1)\sinh^2Z\le\gamma\cosh^2Z$,
\begin{equation}
\label{Rprime-bound}
 0<R'(Z)\le\alpha\gamma(\sinh Z)^{\gamma-1}.
\end{equation}
Finally
\begin{equation}
\label{Sprime}
 S'(Z)=S(Z)\bigl(\delta\coth Z+(q-1)\tanh Z\bigr)>0 .
\end{equation}

\emph{Behavior as $Z\downarrow0$.}  We have $D(Z)\ge A\csch^2Z-1\to\infty$, while $R'(Z)\to0$ by \eqref{Rprime-bound}, since $\gamma>1$.  Hence $D+S'\to\infty$ and $R'/(D+S')\to0$.

\emph{Behavior for $Z\ge1$.}  Using $\tanh Z\ge\tanh(1)$ and $\cosh Z\ge\sinh Z$ in \eqref{Sprime},
\[
 S'(Z)\ge(q-1)\tanh(1)\,\mu\alpha^{1-q}(\sinh Z)^{\delta+q-1},
 \qquad
 \delta+q-1=\frac{q-1}{\alpha}.
\]
In particular $S'(Z)\to\infty$ as $Z\to\infty$.  Since $\gamma-1=(p-1)/\alpha$, this and \eqref{Rprime-bound} give, for $Z\ge1$,
\begin{equation}
\label{ratio-bound}
 \frac{R'(Z)}{S'(Z)}
 \le\frac{\alpha^q\gamma}{(q-1)\tanh(1)\,\mu}
 (\sinh Z)^{-(q-p)/\alpha}
 \le\frac{\alpha^q\gamma}{(q-1)\tanh(1)\,\mu}
 (\sinh(1))^{-(q-p)/\alpha}.
\end{equation}
Here we used $q\ge p$; this is the only place where the hypothesis enters.

\emph{Conclusion.}  The function $R'/(D+S')$ is continuous on $(0,\infty)$, tends to $0$ at the origin, and is bounded on $[1,\infty)$ by \eqref{ratio-bound}; hence it is bounded, which is the first inequality in \eqref{coeff-bounds}.  The function $D+S'$ is continuous and positive and tends to $+\infty$ at both ends of $(0,\infty)$; hence it has a positive minimum, which is the second.
\end{proof}

\begin{remark}
\label{rem:ratio}
A finer computation identifies the behavior at infinity.  Using $\sinh Z\sim\cosh Z\sim e^Z/2$, we get
\[
 R'(Z)\sim\alpha(\gamma-1)2^{1-\gamma}e^{(\gamma-1)Z},
 \qquad
 S'(Z)\sim\mu\alpha^{1-q}(\delta+q-1)
 2^{-(\delta+q-1)}e^{(\delta+q-1)Z},
\]
with $\gamma-1=(p-1)/\alpha$ and $\delta+q-1=(q-1)/\alpha$.  Hence
\begin{equation*}
\label{ratio-infty}
 \frac{R'(Z)}{S'(Z)}
 \longrightarrow
 \begin{cases}
 \dfrac{\alpha^p}{\mu}, & q=p,\\[2mm]
 0, & q>p,
 \end{cases}
 \qquad Z\to\infty .
\end{equation*}
If $q<p$ the same ratio tends to $+\infty$, and the argument breaks down: the source then dominates the absorption at large amplitudes, and no bound uniform in $Z$ is available.
\end{remark}

\subsection{The Bernstein identity}

\begin{lemma}[Equation for $W$]
\label{lem:bernstein}
In the setting of Lemma~\ref{lem:transform}, assume in addition that $\nabla u\in C^{2,1}(Q)$.
Then $W\in C^{2,1}(Q)$ and, in $Q$,
\begin{equation}
\label{Weq}
 W_t-\Delta W-b\cdot\nabla W
 =-2D(Z)W^2+2R'(Z)W-2S'(Z)W^{1+q/2}-2|D^2Z|^2,
\end{equation}
where
\begin{equation}
\label{drift}
 b:=\bigl(2a(Z)-q\,S(Z)|\nabla Z|^{q-2}\bigr)\nabla Z,
\end{equation}
with the convention $|\nabla Z|^{q-2}\nabla Z:=0$ where $\nabla Z=0$.
\end{lemma}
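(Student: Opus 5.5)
The proof is the standard Bernstein computation applied to \eqref{Zeq}. Write $F(Z,\xi):=a(Z)|\xi|^2+R(Z)-S(Z)|\xi|^q$, so that $Z_t-\Delta Z=F(Z,\nabla Z)$.

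\emph{Regularity.} Since $u>0$ and $\nabla u\in C^{2,1}(Q)$, the map $u\mapsto\arsinh(u^\alpha)$ is smooth on $(0,\infty)$. Hence $\nabla Z=\alpha u^{\alpha-1}(1+u^{2\alpha})^{-1/2}\nabla u$ belongs to $C^{2,1}(Q)$. Therefore $W=|\nabla Z|^2\in C^{2,1}(Q)$, and $Z$ has the third spatial derivatives and mixed derivatives $\partial_t\nabla Z$ needed below. We compute $W_t-\Delta W=2\nabla Z\cdot\nabla(Z_t-\Delta Z)-2|D^2Z|^2$. This uses the Bochner-type identity
\[
\Delta|\nabla Z|^2=2\nabla Z\cdot\nabla\Delta Z+2|D^2Z|^2 ,
\]
which holds in flat space with no curvature term.

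\emph{Differentiating the right-hand side.} Differentiate \eqref{Zeq} in space, treating $R$, $a$, $S$ as functions of $Z$ and $W^{q/2}=|\nabla Z|^q$:
\[
\nabla\bigl(a(Z)W+R(Z)-S(Z)W^{q/2}\bigr)=\bigl(a'(Z)W+R'(Z)-S'(Z)W^{q/2}\bigr)\nabla Z+a(Z)\nabla W-\tfrac q2S(Z)W^{q/2-1}\nabla W .
\]
Dotting with $2\nabla Z$ and using $|\nabla Z|^2=W$ gives three contributions. The zero-order terms become $2a'(Z)W^2+2R'(Z)W-2S'(Z)W^{1+q/2}$. The gradient terms become $\bigl(2a(Z)-qS(Z)W^{(q-2)/2}\bigr)\nabla Z\cdot\nabla W$, which is exactly $b\cdot\nabla W$ with $b$ as in \eqref{drift}. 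Finally, since $D=-a'$, the first term is $-2D(Z)W^2$. Together these give \eqref{Weq}.

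\emph{Main obstacle: the points where $\nabla Z=0$ when $1<q<2$.} There $W^{q/2-1}$ is singular, so the chain-rule expression for $\nabla(W^{q/2})$ needs justification. The plan is to note that $W^{q/2}=(W)^{q/2}$ with $q/2>1/2$, and that
\[
\nabla(W^{q/2})=q|\nabla Z|^{q-2}D^2Z\,\nabla Z ,
\]
which is continuous and vanishes where $\nabla Z=0$ because $q>1$. Hence the product $W^{q/2-1}\nabla W$ should be read as $2|\nabla Z|^{q-2}D^2Z\nabla Z$, which matches the convention $|\nabla Z|^{q-2}\nabla Z:=0$. At such points both sides of \eqref{Weq}, computed with $\nabla Z\cdot\nabla(\cdot)$, reduce to $-2|D^2Z|^2$ (and $W_t=0$ there when $\nabla Z\equiv0$ locally). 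Where $\nabla Z\neq0$ the computation above is classical, since $W^{q/2}$ is smooth in $W>0$. It therefore suffices to verify the identity on the open set $\{\nabla Z\ne0\}$ and, by continuity of both sides (using $\nabla Z\in C^{2,1}$), on its closure. On the interior of the zero set, $W\equiv0$ and the identity is trivial since there $D^2Z=0$ as well.
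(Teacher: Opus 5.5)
Your proof is correct and follows essentially the same route as the paper. Both use the Bochner identity, differentiate \eqref{Zeq}, and handle the zero set of $\nabla Z$ via the $C^1$ chain rule $\nabla(|\nabla Z|^q)=\frac q2|\nabla Z|^{q-2}\nabla W$, read as $0$ where $\nabla Z=0$. Your extra continuity and interior-of-the-zero-set argument is redundant, since this chain rule already holds pointwise everywhere; likewise $W_t=2\nabla Z\cdot\partial_t\nabla Z$ vanishes at every zero of $\nabla Z$, not only where $\nabla Z\equiv0$ locally.
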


\begin{proof}
Since $Z=F(u)$ with $F(s)=\arsinh(s^\alpha)$ smooth on $(0,\infty)$, the assumed regularity of $u$ gives $Z\in C^3$ in $x$ with $\nabla Z_t\in C(Q)$, hence $W\in C^{2,1}(Q)$.  The Bochner identity
\[
 \Delta|\nabla Z|^2=2|D^2Z|^2+2\nabla Z\cdot\nabla\Delta Z,
\]
which is the starting point of the Bernstein method (see \cite{Serrin1971,LSU1968,BartierSouplet2004}), gives
\[
 W_t-\Delta W=2\nabla Z\cdot\nabla(Z_t-\Delta Z)-2|D^2Z|^2 .
\]
We insert \eqref{Zeq} and use $\nabla W=2D^2Z\,\nabla Z$.  Since $\xi\mapsto|\xi|^q$ is $C^1$ with gradient $q|\xi|^{q-2}\xi$, we have $\nabla(|\nabla Z|^q)=\frac q2|\nabla Z|^{q-2}\nabla W$ everywhere, the right-hand side being interpreted as $0$ where $\nabla Z=0$.  Therefore
\begin{align*}
 2\nabla Z\cdot\nabla\bigl(a(Z)W\bigr)
 &=2a'(Z)W^2+2a(Z)\nabla Z\cdot\nabla W,\\
 2\nabla Z\cdot\nabla R(Z)
 &=2R'(Z)W,\\
 -2\nabla Z\cdot\nabla\bigl(S(Z)|\nabla Z|^q\bigr)
 &=-2S'(Z)W^{1+q/2}
 -q\,S(Z)|\nabla Z|^{q-2}\nabla Z\cdot\nabla W .
\end{align*}
Collecting the first-order terms and using $a'=-D$ gives \eqref{Weq}.
\end{proof}

Note that,
when $q<2$ the factor $|\nabla Z|^{q-2}\nabla Z$ in \eqref{drift} is understood by continuous extension at $\nabla Z=0$; indeed $\bigl||\nabla Z|^{q-2}\nabla Z\bigr|=W^{(q-1)/2}\to0$ as $W\downarrow0$, because $q>1$.

\subsection{Proof of Theorem~\ref{thm:universal}}

Since $u_0\not\equiv0$, Proposition~\ref{prop:local}(i) gives $u>0$ in $(0,\Tmax)\times\RN$.  Let $K$ and $g_0$ be the constants of Lemma~\ref{lem:coeff}, and set
\begin{equation}
\label{B0}
 \sigma:=\min\Bigl\{1,\frac q2\Bigr\},
 \qquad
  B_0:=\max\bigl\{1,(2K)^{1/\sigma}\bigr\}.
\end{equation}

\medskip
\noindent\emph{Step 1: a differential inequality.}
Let $v$ be a positive solution of \eqref{P} in an open set $Q$ with $\nabla v\in C^{2,1}(Q)$, and let $Z,W,b$ be as above.  We claim that
\begin{equation}
\label{W-ineq}
 W_t-\Delta W-b\cdot\nabla W
 \le-g_0W^{1+\sigma}
 \qquad\text{at every point of $Q$ where }W\ge B_0.
\end{equation}
First, since $W\ge B_0\ge1$ and $D,S'>0$,
\begin{equation*}
\label{power-combine}
 D(Z)W^2+S'(Z)W^{1+q/2}
 \ge\bigl(D(Z)+S'(Z)\bigr)W^{1+\sigma}.
\end{equation*}
Indeed, if $q\ge2$ then $\sigma=1$ and $W^{1+q/2}\ge W^2$; if $q<2$ then $\sigma=q/2$ and $W^2\ge W^{1+q/2}$.  Dropping the term $-2|D^2Z|^2\le0$ in \eqref{Weq} and using \eqref{coeff-bounds} together with $2K\le B_0^\sigma\le W^\sigma$,
\[
 W_t-\Delta W-b\cdot\nabla W
 \le-2(D+S')W^{1+\sigma}+2K(D+S')W
 \le-(D+S')W^{1+\sigma}
 \le-g_0W^{1+\sigma}.
\]

\medskip
\noindent\emph{Step 2: barrier.}
Let 
\[
 \psi(t):=\Bigl(\frac{\sigma g_0}{2}t\Bigr)^{-1/\sigma},
 \qquad t>0,
\]
so that $\psi'=-\frac{g_0}{2}\psi^{1+\sigma}$,
 and set
$$h(t):=B_0+\psi(t)\le B\bigl(1+t^{-1/\sigma}\bigr)=:L(t)$$
where, in view of Lemma~\ref{lem:coeff} and \eqref{B0},
$B>0$ depends only on $p,q,\mu,\alpha$.
  Since $h\ge\psi$ and $h\ge B_0$,
\begin{equation}
\label{barrier}
 -g_0h^{1+\sigma}
 =-\frac{g_0}{2}h^{1+\sigma}-\frac{g_0}{2}h^{1+\sigma}
 \le-\frac{g_0}{2}\psi^{1+\sigma}-\frac{g_0}{2}B_0^{1+\sigma}
 =h'-c,
 \qquad
 c:=\frac{g_0}{2}B_0^{1+\sigma}>0 .
\end{equation}
Note that $h$ is decreasing, with $h(t)\to\infty$ as $t\downarrow0$.

\medskip
\noindent\emph{Step 3: positive-floor approximation.}
Fix $T\in(0,\Tmax)$, and let $\eta_0$ and $u_\eta$, $0<\eta\le\eta_0$, be as in Proposition~\ref{cor:consequences}.  Set
\[
 Z_\eta:=\arsinh(u_\eta^\alpha),
 \qquad
 W_\eta:=|\nabla Z_\eta|^2,
\]
and let $b_\eta$ be given by \eqref{drift} with $Z_\eta$ in place of $Z$.  On $[0,T]\times\RN$ we have $\eta\le u_\eta\le M_\eta$ and $|\nabla u_\eta|\le G_\eta$ for some finite $M_\eta,G_\eta$.  Therefore:
\begin{itemize}[leftmargin=*]
\item $Z_\eta$ takes values in the compact interval $[\arsinh\eta^\alpha,\arsinh M_\eta^\alpha]\subset(0,\infty)$;
\item $W_\eta\le\alpha^2\eta^{2\alpha-2}G_\eta^2$, since $|\nabla Z_\eta|\le\alpha u_\eta^{\alpha-1}|\nabla u_\eta|$;
\item $b_\eta$ is bounded on $(0,T]\times\RN$, since
$|b_\eta|\le2|a(Z_\eta)||\nabla Z_\eta|+qS(Z_\eta)|\nabla Z_\eta|^{q-1}$.
\end{itemize}
By \eqref{HigherReg} in Proposition~\ref{prop:local}(ii), $\nabla u_\eta\in C^{2,1}((0,T]\times\RN)$, so Lemma~\ref{lem:bernstein} and Step~1 apply to $u_\eta$ on $(0,T]\times\RN$, and $W_\eta\in C^{2,1}((0,T]\times\RN)$.

Since $W_\eta$ is bounded and $h(s)\to\infty$ as $s\downarrow0$, we may choose $s\in(0,T)$ with $h(s)\ge\sup W_\eta$.  On $[s,T]$ we apply Lemma~\ref{lem:MP}: at any point of $(s,T]\times\RN$ where $W_\eta>h(t)$ we have $W_\eta>B$, so \eqref{W-ineq} and \eqref{barrier} give
\[
 (W_\eta)_t-\Delta W_\eta-b_\eta\cdot\nabla W_\eta
 \le-g_0W_\eta^{1+\sigma}
 <-g_0h^{1+\sigma}
 \le h'-c .
\]
Lemma~\ref{lem:MP} yields $W_\eta\le h$ on $[s,T]\times\RN$.  Since $h$ is decreasing and $W_\eta\le h(s)$ on $(0,s]$, in fact
\begin{equation}
\label{Weta}
 W_\eta(t,x)\le L(t),
 \qquad 0<t\le T,\quad x\in\RN,
 \quad 0<\eta\le\eta_0 .
\end{equation}
In particular, no limit $s\downarrow0$ is needed: the bound on $(0,s]$ is automatic.

\medskip
\noindent\emph{Step 4: passage to the limit.}
Fix $(t,x)\in(0,T]\times\RN$.  Then $u_\eta(t,x)\to u(t,x)>0$ and $\nabla u_\eta(t,x)\to\nabla u(t,x)$.  Moreover $\nabla Z_\eta=\Phi(u_\eta,\nabla u_\eta)$ with
\[
 \Phi(s,\xi):=\frac{\alpha s^{\alpha-1}\xi}{\sqrt{1+s^{2\alpha}}}
\]
continuous on $(0,\infty)\times\RN$.  Hence $W_\eta(t,x)\to W(t,x)$, and \eqref{Weta} gives \eqref{universal},
 hence \eqref{grad-u}, on $(0,T]\times\RN$.  Since $T<\Tmax$ is arbitrary,
 the theorem follows. 
\qed

\begin{remark}
\label{rem:where}
\emph{(Where the hypotheses enter)}  In the proof of Theorem~\ref{thm:universal} the condition $q\ge p$ is used only once, in Lemma~\ref{lem:coeff}, to compare the growth of the reaction and absorption coefficients of the transformed equation as $u\to\infty$; it is used again, independently, in the barrier computations of Remarks~\ref{rem:barrier} and \ref{rem:localization}.  The restriction $\alpha<\frac12$ is used twice: in Lemma~\ref{lem:coeff}, where it gives $A>1$ and hence $D>0$ and,
when applying Theorem~\ref{thm:universal}, 
in Lemma~\ref{prop:tails}, where it gives $2-2\alpha>1$.
\end{remark}

\section{Global existence}
\label{sec:global}

The proof is split in several subsections, displaying independently the different arguments involved, 
in order to clearly emphasize the role of the hypotheses.
The assumption $q\ge p$ enters in the proof at the final step only, through the use of the gradient estimate from 
Theorem~\ref{thm:universal}. 

\subsection{Two elementary lemmas}

\begin{lemma}[Uniform heat tails]
\label{lem:heat-tail}
If $g\in C_0(\RN)$ and $T>0$, then
\begin{equation*}
\label{heat-tail}
 \lim_{R\to\infty}
 \bigg\{ \sup_{0\le t\le T}\sup_{|x|\ge R}
 |e^{t\Delta}g(x)|\bigg\}=0. 
\end{equation*}
\end{lemma}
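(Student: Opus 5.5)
We prove the uniform heat tail property by an $\eps/2$ splitting of $g$ into a compactly supported part and a small remainder, using only two facts about the heat semigroup. First, it is a contraction on $L^\infty(\RN)$: $\norm{e^{t\Delta}f}_\infty\le\norm{f}_\infty$. Second, it has an explicit Gaussian kernel.

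Fix $\eps>0$. Since $g\in C_0(\RN)$, there is $R_0>0$ such that $|g(y)|\le\eps/2$ for $|y|\ge R_0$. Split $g=g_1+g_2$ with $g_1=g\,\mathbf 1_{\{|y|<R_0\}}$ and $g_2=g\,\mathbf 1_{\{|y|\ge R_0\}}$. Both pieces are bounded measurable functions. By positivity and contraction of the heat kernel,
\[
|e^{t\Delta}g_2(x)|\le \eps/2 \qquad\text{for all } t\ge0,\ x\in\RN.
\]
For $g_1$, at $t=0$ we have $e^{0\Delta}g_1(x)=g_1(x)=0$ for $|x|\ge R_0$. For $0<t\le T$ and $|x|\ge 2R_0$, every $y$ with $|y|<R_0$ satisfies $|x-y|\ge|x|/2$. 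Hence
\[
|e^{t\Delta}g_1(x)|\le \norm{g}_\infty\,(4\pi t)^{-N/2}|B_{R_0}|\,e^{-|x|^2/(16t)}.
\]
The function $t\mapsto t^{-N/2}e^{-r^2/(16t)}$ is bounded on $(0,T]$ by $C(N,T)\,r^{-N}$ for $r\ge r_0(T)$; this follows either from the maximum of $s^{N/2}e^{-s}$ or directly by monotonicity once $r^2\ge 8NT$. It therefore tends to $0$ as $r=|x|\to\infty$, uniformly in $t\in(0,T]$. So there is $R_1\ge 2R_0$ with $|e^{t\Delta}g_1(x)|\le\eps/2$ for all $t\in[0,T]$ and $|x|\ge R_1$.

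Combining the two bounds gives $\sup_{0\le t\le T}\sup_{|x|\ge R}|e^{t\Delta}g(x)|\le\eps$ for $R\ge R_1$, which is the claim. The only point needing care is the uniformity in $t$ near $0$ in the Gaussian bound. This is handled by the elementary estimate $\sup_{0<t\le T}t^{-N/2}e^{-r^2/(16t)}\to0$ as $r\to\infty$, which holds because $t^{-N/2}e^{-r^2/(16t)}=r^{-N}\,s^{N/2}e^{-s/16}$ with $s=r^2/t$, and $s^{N/2}e^{-s/16}$ is bounded.
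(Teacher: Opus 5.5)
Your proof is correct, but it takes a different route from the paper.

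The paper argues abstractly. By strong continuity of $e^{t\Delta}$ on $C_0(\RN)$ (cited from Pazy), the orbit $\{e^{t\Delta}g:\ 0\le t\le T\}$ is a compact subset of $C_0(\RN)$. A compact subset of $C_0(\RN)$ has a finite $\eps$-net, so it vanishes at infinity uniformly.

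You instead split $g$ into a compactly supported piece and a remainder of sup-norm at most $\eps/2$. You control the remainder by the $L^\infty$ contraction and the compactly supported piece by an explicit Gaussian bound.

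The paper's argument is two lines and works verbatim for any strongly continuous semigroup on $C_0(\RN)$. However, it takes as given both the invariance of $C_0(\RN)$ and the strong continuity at $t=0$. These facts are typically proved by exactly the kind of approximation you carry out. The compactness argument is also tied to compact time intervals.

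Your argument is self-contained, uses only the kernel, and in fact proves more. Since $t^{-N/2}e^{-r^2/(16t)}=r^{-N}s^{N/2}e^{-s/16}$ and $s^{N/2}e^{-s/16}$ is bounded over all $s>0$, your bound on $e^{t\Delta}g_1$ is uniform in all $t>0$. You therefore get the tail estimate with $\sup_{t\ge0}$ in place of $\sup_{0\le t\le T}$, together with an explicit rate $O(|x|^{-N})$ for the compactly supported part. Neither the restriction $t\le T$ nor the monotonicity remark about $r^2\ge 8NT$ is actually needed.
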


\begin{proof}
The map $[0,T]\ni t\mapsto e^{t\Delta}g\in C_0(\RN)$ is continuous \cite{Pazy1983}, so its image is compact.  A compact subset of $C_0(\RN)$ admits a finite $\eps$-net for every $\eps>0$, and therefore vanishes uniformly at infinity.
\end{proof}

\begin{lemma}
\label{lem:polyexp}
For every $r\ge1$,
\begin{equation*}
\label{polyexp}
 e^{-s}s^r\le K_r(1-e^{-s}),
 \qquad s\ge0,
 \qquad
 \hbox{ where }\ K_r:=\sup_{s>0}\frac{s^r}{e^s-1}<\infty .
\end{equation*}
\end{lemma}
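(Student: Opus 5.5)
The plan is to reduce the statement to two elementary facts about the function $f(s):=s^r/(e^s-1)$ on $(0,\infty)$: that it is bounded, and that the resulting bound $K_r$ implies the claimed inequality.

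First I show $K_r<\infty$. The function $f$ is continuous on $(0,\infty)$, so it suffices to control both ends.
\begin{itemize}[leftmargin=*]
\item As $s\downarrow0$, using $e^s-1\ge s$ we get $f(s)\le s^{r-1}$. This stays bounded near $0$ precisely because $r\ge1$; the quotient tends to $1$ if $r=1$ and to $0$ if $r>1$. This is the only place the hypothesis $r\ge1$ is used.
\item As $s\to\infty$, we have $f(s)\le 2s^re^{-s}$ for $s\ge\log 2$, and this tends to $0$.
\end{itemize}
Hence $f$ is bounded on $(0,\infty)$, and so $K_r<\infty$.

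Next I derive the inequality. For $s>0$, the definition of $K_r$ gives $s^r\le K_r(e^s-1)$. Multiplying both sides by $e^{-s}>0$ yields
\[
 e^{-s}s^r\le K_r(1-e^{-s}),
\]
which is the claim. At $s=0$ both sides vanish, since $r\ge1>0$, so the inequality also holds there.

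There is no real obstacle in this argument. The only point needing care is the behavior near $s=0$, where the condition $r\ge1$ is exactly what keeps $s^r/(e^s-1)$ bounded; for $r<1$ the quotient would blow up like $s^{r-1}$.
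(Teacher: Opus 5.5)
Your proof is correct and follows essentially the same route as the paper: rewrite the inequality as $s^r/(e^s-1)\le K_r$, use $r\ge1$ to keep the quotient bounded as $s\downarrow0$, and use its exponential decay at infinity. You also make explicit the bounds $e^s-1\ge s$ and $e^s-1\ge e^s/2$ for $s\ge\log 2$, and check the case $s=0$, which the paper leaves implicit.
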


\begin{proof}
The inequality is equivalent to $s^r/(e^s-1)\le K_r$.  As $s\downarrow0$ the quotient behaves like $s^{r-1}$, which is bounded because $r\ge1$; it tends to zero exponentially as $s\to\infty$.
\end{proof}

\subsection{Uniform tails}

\begin{lemma}[Finite-time tail preservation]
\label{prop:tails}
Assume $\Omega=\R^N$, $p,q>1$, $\mu\ge 0$. Let $u_0\in X_+\cap C_0(\RN)$ 
be such that $\Tmax<\infty$ and let $\tau\in(0,\Tmax)$.
If
\be{hyp-tails}
\sup_{(\tau,\Tmax)\times{\R}^N} \, \frac{|\nabla u|^2}{u^a+u^b}<\infty
\quad\hbox{ for some $a,b\ge 1$,}
\ee
then 
\begin{equation}
\label{tails}
 \lim_{R\to\infty}
\bigg\{ \sup_{\tau\le t<\Tmax\atop |x|\ge R}u(t,x)\Big\}=0 .
\end{equation}
\end{lemma}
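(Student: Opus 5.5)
The plan follows the strategy sketched in the introduction: pass to $V:=1-e^{-u}$, show that $V$ is a subsolution of a linear heat equation with bounded potential on $(\tau,\Tmax)$, and then dominate $V$ by the heat semigroup, whose tails are uniformly small by Lemma~\ref{lem:heat-tail}.

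\textbf{Step 1: the equation for $V$.} A direct computation gives
\[
 V_t-\Delta V=e^{-u}(u_t-\Delta u)+e^{-u}|\nabla u|^2
 =e^{-u}\bigl(u^p-\mu|\nabla u|^q+|\nabla u|^2\bigr).
\]
Since $\mu\ge0$, the term $-\mu|\nabla u|^q$ can be dropped. By \eqref{hyp-tails} there is $C_1$ with $|\nabla u|^2\le C_1(u^a+u^b)$ on $(\tau,\Tmax)\times\RN$. Lemma~\ref{lem:polyexp} applies with $r=p,a,b\ge1$ and gives
\[
 V_t-\Delta V\le (K_p+C_1K_a+C_1K_b)\,V=:CV,
 \qquad (\tau,\Tmax)\times\RN .
\]
This is exactly where $a,b\ge1$ is needed: the bound must be linear in $V$ near $u=0$.

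\textbf{Step 2: comparison.} We want $V(t)\le e^{C(t-\tau)}e^{(t-\tau)\Delta}V(\tau)$ for $t\in[\tau,\Tmax)$. The right side is the solution of $w_t-\Delta w=Cw$ with $w(\tau)=V(\tau)$. The comparison is justified on each $[\tau,T]$ with $T<\Tmax$, because $u$, and hence $V\in[0,1]$, is bounded and regular there. I would invoke the comparison principle for bounded sub/supersolutions on $\RN$ from the Appendix. Note that $C$ is independent of $T$, so the bound holds up to $\Tmax$.

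\textbf{Step 3: tails at the initial time $\tau$.} We need $V(\tau)\in C_0(\RN)$, i.e.\ $u(\tau)\in C_0$. I expect to get this from the local theory (Proposition~\ref{prop:local}): solutions with data in $C_0$ stay in $C_0$ on $[0,\Tmax)$. This follows from the Duhamel formula together with $u^p\in C_0$ and the gradient term, which has decaying tails by standard semigroup estimates in $C_0$ (the same fixed-point argument in $C_0$). If the appendix does not state this directly, I would derive it by a fixed-point argument in $C([0,T];C_0)\cap\{\nabla u\in C_0\}$. This is the step I regard as the main technical point, though it is routine.

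\textbf{Step 4: conclusion.} Set $T_0:=\Tmax-\tau<\infty$ and $g:=V(\tau)\in C_0$. Then
\[
 \sup_{\tau\le t<\Tmax,\ |x|\ge R} V\le e^{CT_0}\sup_{0\le s\le T_0,\ |x|\ge R}e^{s\Delta}g(x),
\]
and the right side tends to $0$ as $R\to\infty$ by Lemma~\ref{lem:heat-tail}. Since $u=-\log(1-V)$ and $-\log(1-v)\le 2v$ for $v\le1/2$, the tail bound \eqref{tails} follows.
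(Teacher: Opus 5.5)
Your proposal is correct and follows the paper's proof essentially step for step. You pass to $V=1-e^{-u}$, use Lemma~\ref{lem:polyexp} to get $V_t-\Delta V\le CV$, apply the linear comparison of Proposition~\ref{cor:linear} on each $[\tau,T]$, and conclude with Lemma~\ref{lem:heat-tail}; the fact $u(\tau)\in C_0(\RN)$ needed in your Step~3 is exactly Proposition~\ref{lem:C0}, which the paper proves by the fixed-point argument you anticipate.
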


We stress that this lemma, as well as that in the next subsection, does not require  $q\ge p$.
Also the ratio in  \eqref{hyp-tails} is well defined since $u(t,\cdot)>0$ for each $t\in(0,\Tmax)$, owing to $u_0\not\equiv0$.

\begin{proof}
Let $V:=1-e^{-u}\in[0,1)$.  Then $V\in C^{2,1}((0,\Tmax)\times\RN)$ and
\[
 V_t-\Delta V
 =e^{-u}\bigl(u_t-\Delta u+|\nabla u|^2\bigr)
 =e^{-u}\bigl(u^p-\mu|\nabla u|^q+|\nabla u|^2\bigr).
\]
Discarding the absorption term and 
denoting by $M$ the supremum in assumption \eqref{hyp-tails},
we obtain, for $\tau\le t<\Tmax$,
\[
 V_t-\Delta V
 \le  e^{-u}\bigl(u^p+Mu^a+Mu^b\bigr).
\]
Since all three exponents $p,a,b$ are at least $1$,
Lemma~\ref{lem:polyexp} therefore gives
\begin{equation*}
\label{Vlinear}
 V_t-\Delta V\le C_\tau V
 \quad\text{in }[\tau,\Tmax)\times\RN,
 \qquad
 C_\tau:=K_p+ M\bigl(K_a+K_b\bigr).
\end{equation*}
By Lemma~\ref{lem:C0}, $u(\tau)\in C_0(\RN)$, and since $0\le1-e^{-s}\le s$, also $V(\tau)\in C_0(\RN)$.  For every $T\in(\tau,\Tmax)$, Proposition~\ref{cor:linear} on $[\tau,T]$ gives
\begin{equation*}
\label{Vcomp}
 0\le V(t)
 \le e^{C_\tau(t-\tau)}e^{(t-\tau)\Delta}V(\tau)
 \le e^{C_\tau\Tmax}e^{(t-\tau)\Delta}V(\tau),
 \qquad \tau\le t<\Tmax .
\end{equation*}
Lemma~\ref{lem:heat-tail} on $[0,\Tmax-\tau]$ shows that $V$ vanishes at infinity uniformly for $t\in[\tau,\Tmax)$.  
Since $u=-\log(1-V)$ and $-\log(1-s)\to0$ as $s\downarrow0$, 
property \eqref{tails} follows.
\end{proof}

\subsection{Blow-up of gradient ratios at $\Tmax$}

\begin{lemma} 
\label{prop:tails2}
Let $\Omega,p,q,\mu,u_0,\tau$ be as in Lemma~\ref{prop:tails}. Then, for all $\alpha\in(0,1/2]$,
\be{hyp-tails2}
\sup_{(\tau,\Tmax)\times{\R}^N} \,\frac{|\nabla u|^2}{u^2+u^{2-2\alpha}}=\infty.
\ee
\end{lemma}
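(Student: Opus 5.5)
We argue by contradiction: suppose the supremum in \eqref{hyp-tails2} is finite for some $\alpha\in(0,1/2]$, and show that $u$ is then bounded on $[\tau,\Tmax)\times\RN$. This is incompatible with $\Tmax<\infty$ by the blow-up alternative recalled in the introduction: for $\mu\ge0$, blow-up occurs in $L^\infty$ norm (Proposition~\ref{prop:local}). The argument has two steps, both mirroring the strategy sketched for Theorem~\ref{thm:main-BUC}.

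\emph{Step 1: uniform tails.} Set $a=2-2\alpha$ and $b=2$. Since $\alpha\le1/2$, we have $a\ge1$ and $b\ge1$, so the finiteness of the supremum is exactly hypothesis \eqref{hyp-tails} of Lemma~\ref{prop:tails}. That lemma yields \eqref{tails}. Hence there is $R>0$ such that $u(t,x)\le1$ for all $\tau\le t<\Tmax$ and $|x|\ge R$.

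\emph{Step 2: bound inside the ball.} Let $M$ be the (finite) supremum. As in Remark~\ref{rem:comments}(b), with $Z=\arsinh(u^\alpha)$ we have
\[
|\nabla Z|^2=\frac{\alpha^2u^{2\alpha-2}|\nabla u|^2}{1+u^{2\alpha}}
\le \alpha^2M\,\frac{u^{2\alpha}+1}{1+u^{2\alpha}}=\alpha^2M .
\]
So $Z(t,\cdot)$ is $\alpha\sqrt M$-Lipschitz for every $t\in[\tau,\Tmax)$. Take $|x|<R$ and join $x$ to a point $x_0$ with $|x_0|=R$ by a segment of length at most $2R$. Since $Z(t,x_0)\le\arsinh 1$, we get
\[
Z(t,x)\le \arsinh 1+2R\alpha\sqrt M .
\]
Thus $u=(\sinh Z)^{1/\alpha}$ is bounded on $[\tau,\Tmax)\times\RN$ by a constant independent of $t$. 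This contradicts $\limsup_{t\to\Tmax}\|u(t)\|_\infty=\infty$ and proves \eqref{hyp-tails2}.

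The only delicate point is the quotient in \eqref{hyp-tails2} when $u$ vanishes. By Proposition~\ref{prop:local}(i), $u>0$ for $t>0$, since $\Tmax<\infty$ forces $u_0\not\equiv0$. Therefore $Z$ is $C^1$ in $x$ and the Lipschitz argument is legitimate. The main ingredient, Lemma~\ref{prop:tails}, is already available, so the argument consists mainly of checking that the exponents satisfy $a,b\ge1$. This is where $\alpha\le1/2$ is needed.
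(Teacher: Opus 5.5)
Your proof is correct and follows the paper's argument essentially step for step. Lemma~\ref{prop:tails} with exponents $2$ and $2-2\alpha\ge1$ gives $u\le1$ outside a ball, and the Lipschitz bound $|\nabla \arsinh(u^\alpha)|^2\le\alpha^2M$ then bounds $u$ inside it, contradicting the $L^\infty$ blow-up alternative. One small correction: that alternative is Proposition~\ref{BUalt}, which uses $\mu\ge0$; Proposition~\ref{prop:local} only gives blow-up of the $C^1$ norm.
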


\begin{proof}
Assume for contradiction that \eqref{hyp-tails2} fails for some $\alpha\in(0,1/2)$.
Consequently $Z:=\arsinh(u^\alpha)$ satisfies the Lipschitz bound
\[
\tilde M:=\sup_{(\tau,\Tmax)\times{\R}^N} \, |\nabla Z|<\infty.
\]
On the other hand, since  \eqref{hyp-tails} is satisfied with $a=2$ and $b=2-2\alpha$, Lemma~\ref{prop:tails} 
guarantees a uniform control at large distance; there exists $R>0$ such that 
\begin{equation}
\label{exterior-bound}
 u(t,x)\le1,
 \qquad \tau\le t<\Tmax,
 \quad|x|\ge R .
\end{equation}
Let $|x|<R$ and choose $x'$ with $|x'|=R$ and $|x-x'|\le R$: take $x'=Rx/|x|$ if $x\ne0$, and $x'=Re_1$ if $x=0$.  Then, by \eqref{exterior-bound},
\[
 Z(t,x)\le Z(t,x')+\tilde M R  \le\arsinh1+\tilde MR,
\]
whence
\[
 \sup_{\tau\le t<\Tmax}\norm{u(t)}_\infty
 \le\Bigl[\sinh\bigl(\arsinh1+\tilde M R\bigr)\Bigr]^{1/\alpha}
 <\infty .
\]
Since $\Tmax<\infty$, this is a contradiction with the blow-up alternative in Proposition~\ref{BUalt}. 
\end{proof}

\subsection{Completion of proof of Theorem~\ref{thm:main-BUC}}

\indent {\bf Case 1:} $\Omega=\R^N$.
 Assume for contradiction that $\Tmax<\infty$.  
Fix some $\alpha\in(0,1/2)$ and $\tau\in(0,\Tmax)$. Theorem~\ref{thm:universal} guarantees 
 that property \eqref{hyp-tails2} fails.
This is a contradiction with Lemma~\ref{prop:tails2}. Consequently  $\Tmax=\infty$.
\smallskip

\noindent {\bf Case 2:} General domain $\Omega$.
Denote by $\bar u_0$ the extension of $u_0$ by $0$ in $\R^N\setminus\Omega$.
Then $\bar u_0\in W^{1,\infty}\cap C_0(\R^N)$. 
By minor modifications of the standard local existence arguments
(cf.,~e.g.,~the proof of Proposition~\ref{lem:C0}), problem \eqref{P} with initial data $\bar u_0$ 
has a unique 
local solution $\bar u$, and it satisfies $\bar u(t)\in BC^1(\R^N)\cap C_0(\R^N)$ for $t>0$ small.
It follows from Case 1 that $\bar u$ exists globally.
Assume for contradiction that $\Tmax<\infty$. 
By the blow-up alternative in Proposition~\ref{BUalt}, we thus have $\lim_{t\in (0,\Tmax)}\|u(t)\|_\infty=\infty$.
On the other hand, by the comparison principle in 
\cite[Proposition 52.10]{QuittnerSouplet2019}, we have $0\le u\le \bar u$ on $(0,\Tmax)\times\Omega$,
hence $\sup_{t\in (0,\Tmax)}\|u(t)\|_\infty\le \sup_{t\in (0,\Tmax)}\|\bar u(t)\|_\infty<\infty$. This is a contradiction.
\qed

\begin{remark}[Exterior localization]
\label{rem:localization}
Instead of the Lipschitz argument in the proof of Lemma~\ref{prop:tails} above, the proof of Theorem~\ref{thm:main-BUC} can be concluded with the barrier of Remark~\ref{rem:barrier}, in the spirit of \cite{SoupletWeissler1999}.  
 Consider the case $\Omega=\RN$ for simplicity. 
Suppose $\Tmax<\infty$ and fix $\tau\in(0,\Tmax)$. By \eqref{exterior-bound} we have
\[
 \sup_{\tau<t<\Tmax, \,|x|\ge R}u(t,x)\le M_0<\infty,
\]
for some $R>0$.
Set
\[
 k=\mu^{-1/q},
 \qquad
 M=\max\{1,M_0,\norm{u_0}_\infty\},
 \qquad
 U(t,x)=M\exp\bigl(k^2t+k(x_1+R)\bigr).
\]
For $|x|<R$ one has $x_1\ge-R$, hence $U\ge M\ge1$, so $U$ is a supersolution in $(0,\Tmax)\times\{|x|<R\}$ by the computation of Remark~\ref{rem:barrier}.  Moreover $U(0,\cdot)\ge M\ge u_0$ on $\{|x|\le R\}$, and $U\ge M\ge M_0\ge u$ on $\{|x|=R\}$ for $0<t<\Tmax$.  Comparison on $(0,T)\times\{|x|<R\}$, $T<\Tmax$, in the classical bounded-domain form 
gives
$u\le M\exp\bigl(k^2\Tmax+2kR\bigr)$
in $\{|x|<R\}$,
so $u$ is bounded on $[0,\Tmax)\times\RN$,  a contradiction as above.  This route uses a bounded-domain comparison instead of the global Lipschitz bound, but it needs the same tail estimate as input.
\end{remark}

\section{Asymptotically constant data}
\label{sec:dichotomy}

\subsection{Preservation of spatial limits}

 The following proposition, which is a variation of the local theory,
 is our basic tool for the study of asymptotically constant data.
It also recovers the (known) fact that the class $C_0$ is preserved by the flow.

\begin{proposition}
\label{lem:C0}
Let $\Omega=\R^N$, $p,q>1$, $\mu\in\R$, 
 $u_0\in X_+$, with $u_0(x)\to\ell\ge0$ as $|x|\to\infty$.  Let $\zeta_\ell$ solve $\zeta'=\zeta^p$, $\zeta(0)=\ell$, on $[0,T_\ell)$, with $\zeta_0\equiv0$ and $T_0:=\infty$.  Then
the solution of \eqref{P} satisfies
\be{uzetaC0}
 u(t)-\zeta_\ell(t)\in C_0(\RN),
 \qquad 0\le t<\min\{\Tmax,T_\ell\}.
\ee
In particular, if $u_0\in C_0(\RN)\cap X_+$, then $u(t)\in C_0(\RN)$ for all $t\in[0,\Tmax)$.
\end{proposition}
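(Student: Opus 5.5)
We prove \eqref{uzetaC0} by writing $u=\zeta_\ell+w$ and showing $w(t)\in C_0(\RN)$. The idea is to solve for $w$ by a fixed point argument in a space of functions vanishing at infinity. The candidate for $w$ is then identified with $u-\zeta_\ell$ by uniqueness of the local classical solution (Proposition~\ref{prop:local}). Since $\zeta_\ell$ is spatially constant, $\nabla u=\nabla w$, and $w$ solves
\[
 w_t-\Delta w=F(t,w,\nabla w):=(\zeta_\ell+w)^p-\zeta_\ell^p-\mu|\nabla w|^q,
 \qquad w(0)=w_0:=u_0-\ell\in C_0(\RN)\cap BC^1(\RN).
\]
The key structural facts are the following. $F(t,0,0)=0$. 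Moreover $F$ is locally Lipschitz in $(w,\xi)$, uniformly for $t$ in compact subsets of $[0,T_\ell)$, because $p,q>1$. Also, $(\zeta_\ell+w)^p$ is understood as $|\zeta_\ell+w|^p$, or via a truncation; this is harmless since $u\ge0$ a posteriori. Hence $|F(t,w,\nabla w)|\le C(|w|+|\nabla w|)$ on bounded sets, so $F(t,w,\nabla w)\in C_0$ whenever $w,\nabla w$ are bounded and tend to $0$ at infinity.

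\emph{Step 1 (local fixed point).} Fix $T_1<\min\{\Tmax,T_\ell\}$ and consider the Duhamel map
\[
 \Phi(w)(t)=e^{t\Delta}w_0+\int_0^t e^{(t-s)\Delta}F(s,w(s),\nabla w(s))\,ds
\]
on the space $E_T$ of $w\in C([0,T];C_0(\RN))$ with $\nabla w\in C([0,T];C_0(\RN))$ (or $\nabla w$ bounded with $t^{1/2}$-type weights if $w_0$ is only $BC^1$), normed by $\sup\|w\|_\infty+\sup\|\nabla w\|_\infty$. We use the standard estimates $\|\nabla e^{t\Delta}f\|_\infty\le Ct^{-1/2}\|f\|_\infty$ and the fact that $e^{t\Delta}$ preserves $C_0$, strongly continuous. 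For gradients we note $\nabla e^{t\Delta}w_0=e^{t\Delta}\nabla w_0$, and that $\nabla w_0$ may not lie in $C_0$. Here lies the main obstacle. We may only know $\nabla u_0\in BC$, and then $\nabla w(t)$ is not in $C_0$ at $t=0$. We circumvent this by working in $\{w\in C([0,T];C_0):\ \nabla w\in L^\infty((0,T)\times\RN)\}$ and showing that the integral term has gradient in $C_0$ for $t>0$. For the first term, $\nabla e^{t\Delta}w_0=\nabla e^{t/2\Delta}(e^{t/2\Delta}w_0)$, and $\nabla e^{\tau\Delta}$ maps $C_0$ to $C_0$ for $\tau>0$. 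Thus $\nabla w(t)\in C_0$ for $t>0$. The nonlinearity, however, only needs $|F|\le C(|w|+|\nabla w|^{\min(q,1)}\dots)$. So we instead estimate $F\in C_0$ using tails: $|F|\le C|w|+\mu|\nabla w|^q$. To control $|\nabla w|^q$ at infinity, we iterate the argument, bootstrapping via the smoothing of the heat kernel. With $t^{1/2}$ singular weights, contraction on a short interval $[0,T]$ with $T$ depending on bounds of $u$ on $[0,T_1]$ follows as in the standard local theory. The same argument works with $C_0$ replaced by $BC$, which yields the solution $u-\zeta_\ell$ itself. Closedness of the $C_0$-subspace then shows that the fixed point, which by uniqueness equals $u-\zeta_\ell$, lies in $C_0$.

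\emph{Step 2 (continuation).} Since the existence time in Step~1 depends only on bounds for $\|u\|_{W^{1,\infty}}$ and $\zeta_\ell$ on $[0,T_1]$, which are finite by the local theory, we can restart from $t_0>0$ with $w(t_0)\in C_0$ and $\nabla w(t_0)\in C_0$. Finitely many steps then cover $[0,T_1]$. Letting $T_1\uparrow\min\{\Tmax,T_\ell\}$ gives \eqref{uzetaC0}. The case $\ell=0$ gives $\zeta_0\equiv0$ and $T_0=\infty$, so $u(t)\in C_0$ for $t<\Tmax$.
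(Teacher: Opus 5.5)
Your plan follows the paper's. You set $w=u-\zeta_\ell$, use that $e^{t\Delta}$ fixes constants to get a Duhamel equation with $F(s,v)=(\zeta_\ell+v)^p-\zeta_\ell^p-\mu|\nabla v|^q$ and $F(s,0)=0$, and run a contraction in a closed subspace of functions vanishing at infinity. You then identify the fixed point with $u-\zeta_\ell$ by uniqueness in $BC^1$, and restart finitely often using $\sup_{[0,T]}\norm{w(t)}_{C^1}<\infty$. However, the step you single out as the main obstacle is not resolved as written.

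In the space you settle on, $\{w\in C([0,T];C_0):\ \nabla w\in L^\infty\}$, the map $\Phi$ does not preserve the space. If $\nabla w(s)$ is merely bounded, then $|\nabla w(s)|^q$, and hence $F(s,w(s))$, need not vanish at infinity. Nothing then places $\int_0^t e^{(t-s)\Delta}F(s,w(s))\,ds$ or its gradient in $C_0$. Your remedy, to ``iterate the argument, bootstrapping via the smoothing of the heat kernel'', is circular. Both $w(t)\in C_0$ and $\nabla w(t)\in C_0$ are derived from $F(s,w(s))\in C_0$, which itself needs $\nabla w(s)\in C_0$. So the induction has no starting point, and the closedness argument in your last sentences has no invariant subspace to act on.

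The missing, and easy, idea is to build the gradient condition into the space from the start, but only for $t>0$. The paper works in $E=L^\infty(0,\tau;C^1_0(\RN))$, where $C^1_0=\{f:\ f,\nabla f\in C_0\}$, with norm $\mathrm{ess\,sup}_{t}\norm{v(t)}_{C^1}$. The possible failure of $\nabla w_0\in C_0$ at the single instant $t=0$ is then irrelevant. Invariance holds for two reasons:
\begin{itemize}
\item $e^{t\Delta}$ maps $C_0$ into $C^1_0$ for each $t>0$. This is your own observation; the paper gets it from $BC^2\subset\BUC^1$ and Lemma~\ref{lem:C0Lem}.
\item $F(s,\cdot)$ is Lipschitz from a ball of $C^1_0$ into $C_0$.
\end{itemize}
The bound $\norm{e^{(t-s)\Delta}f}_{C^1}\le C(t-s)^{-1/2}\norm{f}_\infty$ then gives the contraction on an interval whose length depends only on $\sup_{[0,T]}\norm{w}_{C^1}$ and $\sup_{[0,T]}\zeta_\ell$. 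No $t^{1/2}$ weights are needed, since $w_0\in BC^1$ gives $\norm{\nabla e^{t\Delta}w_0}_\infty\le\norm{\nabla w_0}_\infty$.

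With this space your Steps~1--2 go through. The paper gets $w(t)\in C_0$ for a.e.\ $t$. It then extends this to every $t$ using $u\in C([0,T];BC(\RN))$ and the closedness of $C_0$ in $BC$.
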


 Its proof uses the following elementary fact.
 
\begin{lemma}
\label{lem:C0Lem}
If $g\in \BUC^1(\RN)$ and $g(x)\to\ell\in\R$ as $|x|\to\infty$, then $\lim_{|x|\to\infty}\nabla g(x)=0$.
\end{lemma}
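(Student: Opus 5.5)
We argue by contradiction, using only uniform continuity of $\nabla g$ and the convergence of $g$ at infinity. Suppose the conclusion fails. Then there exist $\eps>0$, a unit vector direction and a sequence $x_n$ with $|x_n|\to\infty$ and $|\nabla g(x_n)|\ge 2\eps$. Passing to a subsequence, we may assume that some fixed coordinate direction, say $e_i$, satisfies $|\partial_i g(x_n)|\ge 2\eps/\sqrt N$ for all $n$. Replacing $g$ by $-g$ along a further subsequence if needed, we may also fix the sign, so that $\partial_i g(x_n)\ge \eps':=2\eps/\sqrt N$.

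Next we use the uniform continuity of $\nabla g$, which holds since $g\in\BUC^1(\RN)$. It yields $h>0$, independent of $n$, such that $\partial_i g(x)\ge\eps'/2$ whenever $|x-x_n|\le h$. Integrating along the segment from $x_n$ to $x_n+he_i$ then gives
\[
 g(x_n+he_i)-g(x_n)=\int_0^h\partial_i g(x_n+se_i)\,ds\ \ge\ \frac{\eps' h}{2}>0
 \qquad\text{for all } n.
\]

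Finally, both points $x_n$ and $x_n+he_i$ tend to infinity in norm, since $|x_n+he_i|\ge|x_n|-h$. The hypothesis $g(x)\to\ell$ therefore forces the left-hand side to tend to $\ell-\ell=0$. This contradicts the uniform positive lower bound $\eps'h/2$, so $\nabla g(x)\to0$ as $|x|\to\infty$.

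There is no real obstacle in this argument. The one point requiring care is that $h$ must be independent of $n$, and this is exactly what uniform continuity of $\nabla g$ provides; mere continuity would not suffice.
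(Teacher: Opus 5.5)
Your proof is correct and is essentially the paper's argument: contradiction, uniform continuity of $\nabla g$ to get a step length independent of $n$, integration along a segment, and the fact that both endpoints tend to infinity. The only difference is cosmetic: the paper integrates along the normalized gradient direction $e_n=\nabla g(x_n)/|\nabla g(x_n)|$, which makes your pigeonhole choice of a fixed coordinate direction and your sign normalization unnecessary.
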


 We recall the simple proof for completeness.

\begin{proof}
Suppose not.  Then there exist $\eps>0$ and $|x_n|\to\infty$ such that $|\nabla g(x_n)|\ge2\eps$.  Put
$e_n:=\frac{\nabla g(x_n)}{|\nabla g(x_n)|}$.
Since $\nabla g$ is uniformly continuous, there exists $r>0$, independent of $n$, such that
$|\nabla g(x_n+se_n)-\nabla g(x_n)|\le\eps$
 for all $s\in[0,r]$, hence $\nabla g(x_n+se_n)\cdot e_n\ge\eps$ for $0\le s\le r$.
 Therefore
\[
 g(x_n+re_n)-g(x_n)
 =\int_0^r\nabla g(x_n+se_n)\cdot e_n\,ds
 \ge\eps r .
\]
Both terms on the left tend to $\ell$, a contradiction.
\end{proof}

\begin{proof}[Proof of Proposition~\ref{lem:C0}]
Set $C_0^1(\RN):=\{f\in C^1(\RN):\ f,\nabla f\in C_0(\RN)\}$ and denote by $e^{t\Delta}$ the heat semigroup on $\RN$.
We will use the well-known semigroup estimates (see, e.g.,~\cite{Pazy1983}):
\begin{equation}
\label{smoothing}
 \norm{e^{t\Delta}f}_\infty\le\norm{f}_\infty,
 \qquad
 \norm{\nabla e^{t\Delta}f}_\infty\le C_Nt^{-1/2}\norm{f}_\infty,
 \qquad f\in L^\infty(\RN),\ t>0,
\end{equation}
 and 
\begin{equation}
\label{smoothing2}
\norm{\nabla e^{t\Delta}f}_\infty\le \norm{\nabla f}_\infty,
 \qquad f\in BC^1(\RN),\ t>0.
\end{equation}
Fix $T<\min\{\Tmax,T_\ell\}$ and write $\zeta=\zeta_\ell$ and $w:=u-\zeta$ on $[0,T]$.  Since $e^{t\Delta}$ fixes constants and $\zeta(t)=\ell+\int_0^t\zeta(s)^p\,ds$, subtracting from \eqref{mild} gives
\[
 w(t)= [\Phi(w)](t):=e^{t\Delta}w(0)
 +\int_0^te^{(t-s)\Delta}F(s,w(s))\,ds,
 \quad
 F(s,v):=\bigl(\zeta(s)+v\bigr)^p-\zeta(s)^p-\mu|\nabla v|^q. 
\]  
 For $t>0$,
$e^{t\Delta}$ maps $BC(\RN)$ into $BC^2(\RN)$
(with $\|e^{t\Delta}\phi\|_{BC^2}\le Ct^{-1}\|\phi\|_\infty$).
Since $BC^2(\RN)\subset BUC^1(\RN)$,
it follows from Lemma~\ref{lem:C0Lem} that 
\be{mapsC0C1}
\hbox{for each $t>0$, $e^{t\Delta}$ maps $C_0(\RN)$ into $C_0^1(\RN)$.}
\ee
Set $W(t)=e^{t\Delta}w(0)$ and $M=\|w(0)\|_{C^1}$. Let $\tau\in (0,\min(1,T)]$ to be chosen below and
put $K_\tau=\sup_{t\in [0,\tau]}\zeta(t)$.
Define
$$E=L^\infty(0,\tau;C^1_0(\RN)),\quad\hbox{with norm } \|v\|_E={\rm ess}\sup_{t\in (0,\tau)}\|v(t)\|_{C^1},
\qquad Y:=\bar B_E(0,2M).$$
Then $Y$ is a complete metric subspace of $E$.
By \eqref{smoothing}--\eqref{mapsC0C1},
since $w(0)\in C_0\cap BC^1(\RN)$, we have $W\in E$ with $\|W(t)\|_E\le M$.
Denote by $\mathcal{B}$ the closed ball of radius $2M$ in $C^1_0(\RN)$.
There exists a constant $L=L(M,K_T,p,q,\mu)>0$ such that, for each $s\in [0,T]$, the map $F(s,\cdot)$ 
 is $L$-Lipschitz from $\mathcal{B}$ to $C_0(\RN)$.
Let $w,\tilde w\in Y$. 
By \eqref{smoothing} it follows that, for all $t\in(0,\tau]$,
$$
\begin{aligned}
\int_0^t \|e^{(t-s)\Delta}&[F(s,w(s))-F(s,\tilde w(s))]\|_{C^1}\,ds \\
&\le C\int_0^t (t-s)^{-1/2} \|F(s,w(s))-F(s,\tilde w(s))\|_\infty ds\le CL\tau^{1/2}\|w-\tilde w\|_E,
\end{aligned}
$$
 where $C=C(N)>0$.
 In particular, taking $\tilde w=0$ and using $F(s,0)=0$, 
 we deduce that $\Phi(w)\in E$ with
$$
 \|\Phi(w)\|_E\
\le \|W\|_E+CL\tau^{1/2}\|w\|_E\le M(1+2CL\tau^{1/2}),$$
and then that
$$
 \|\Phi(w)-\Phi(\tilde w)\|_E
\le CL\tau^{1/2}\|w-\tilde w\|_E.$$
Taking $\tau=\min(T,(1+2CL)^{-2})$, so that $CL\tau^{1/2}\le 1/2$,  
we deduce that the operator $\Phi$ is a contraction from $Y$ to itself,
hence admits a unique fixed point.
By uniqueness in  $BC^1(\RN)$, this fixed point coincides with $w$.  Since $\sup_{[0,T]}\norm{w(t)}_{C^1}<\infty$, finitely many restarts give $w\in L^\infty(0,T;C_0(\RN))$,
hence $w(t)\in C_0(\RN)$ for a.e.~$t\in(0,T)$.
 Since we already know that $u\in C([0,T];BC(\RN))$, property \eqref{uzetaC0} follows. 
\end{proof}

\subsection{Proof of Theorem~\ref{thm:asymptotically-constant}}
 Let
\[
 \zeta(t):=\bigl(\ell^{1-p}-(p-1)t\bigr)^{-1/(p-1)},
 \qquad 0\le t<T_\ell,
\]
be the solution of $\zeta'=\zeta^p$, $\zeta(0)=\ell$, so that $\zeta(t)\to\infty$ as $t\uparrow T_\ell$.
By  Proposition~\ref{lem:C0} we have
$u(t)-\zeta(t)\in C_0(\RN)$, hence
\be{evolving-tail2}
\lim_{|x|\to\infty} u(t,x)=\zeta(t),
 \qquad 0<t<\min\{\Tmax,T_\ell\}.
\ee
In particular
$\norm{u(t)}_\infty\ge\zeta(t)$ for all $t\in (0,\min\{\Tmax,T_\ell\})$.
 Since $\lim_{t\to T_\ell} \zeta(t)=\infty$, it follows that
$\Tmax\le T_\ell$, and \eqref{evolving-tail2} implies \eqref{evolving-tail}.
\medskip

 Next set $m:=\norm{u_0}_\infty\ge\ell$ and let
\[
 \psi(t):=\bigl(m^{1-p}-(p-1)t\bigr)^{-1/(p-1)},
 \qquad 0\le t<T_m,
\]
which solves $\psi'=\psi^p$, $\psi(0)=m$.  
 Assume for contradiction that $\Tmax<T_m$. By the comparison principle 
(cf.~Proposition~\ref{cor:comparison}),
we have $u\le \psi$ on $[0,\Tmax)\times\RN$, hence
$
 \sup_{0\le t<\Tmax}\norm{u(t)}_\infty\le \psi(\Tmax)<\infty,
$
which contradicts the blow-up alternative in Proposition~\ref{BUalt}. 
We conclude that $T_m\le\Tmax\le T_\ell$.
In particular, $T_m=T_\ell$ when $m=\ell$.
\qed

\begin{remark} \label{JustifClaim}
Let us justify the claim in Remark~\ref{AsymptConst}(c).
We first note that there exists $v_0\in X_+$ such that 
\be{propv0u0}
0\le v_0\le u_0\quad\hbox{and}\quad \lim_{|x|\to\infty}v_0(x)=\ell.
\ee
Indeed, since $\ell:=\liminf_{|x|\to\infty}u_0(x)>0$, there exists a sequence $R_n\to \infty$ with $R_0=0$
such that, for each $n\ge 1$, $u_0\ge (1-2^{-n-1})\ell$ on $\{|x|\ge R_n\}$
and $R_n-R_{n-1}\ge 1$.
We may then find a nondecreasing radially symmetric function $v_0\in X_+$ such that
$v_0(R_n)=(1-2^{-n})\ell$ for all $n\ge 0$, hence 
$v_0\le (1-2^{-n-1})\ell\le u_0$
on the annulus $\{R_n\le |x|\le R_{n+1}\}$.
This implies \eqref{propv0u0}.
Now we have $\Tmax(v_0)\le T_\ell<\infty$ by Theorem~\ref{thm:asymptotically-constant}, and 
 $\lim_{t\uparrow\Tmax}\norm{v(t)}_{\infty}=\infty$
 by the blow-up alternative Proposition~\ref{BUalt}. 
 Since, by the comparison principle, $u\ge v$ on the common existence interval,
it follows that $\Tmax(u_0)\le \Tmax(v_0)\le T_\ell$, whereas $\Tmax(u_0)\ge T_{\norm{u_0}_\infty}$
by the above proof.
\end{remark}

\appendix

\section{Auxiliary results: Local theory and comparison principles}
\label{sec:prelim}

\subsection{Local classical theory} 
\label{LocTheory}

We start by recalling the standard local theory in $BC^1$.
Here we denote by $S_D(t)$ the Dirichlet heat semigroup on $\Omega$
(or the Gaussian semigroup $e^{t\Delta}$ in case $\Omega=\RN$).

\begin{proposition}
\label{prop:local}
Assume \eqref{hypOmega}, let $p,q>1$, $\mu\in \R$ and $u_0\in  X_+$.
\begin{enumerate}[label=\textup{(\roman*)},leftmargin=*]
\item There exist $\Tmax\in(0,\infty]$ and a unique $u\in C([0,\Tmax);X_+)$ with
\begin{equation}
\label{mild}
 u(t)=S_D(t)u_0
 +\int_0^t S_D(t-s)[u^p(s)-\mu|\nabla u(s)|^q]\,ds,
 \qquad 0\le t<\Tmax,
\end{equation}
which cannot be continued in this class beyond $\Tmax$.  
If $u_0\not\equiv0$, then $u>0$ in $(0,\Tmax)\times\Omega$.
If $\Tmax<\infty$, then
\be{blowupalt}
\lim_{t\uparrow\Tmax}\norm{u(t)}_{C^1}=\infty.
\ee
\item For $0<t_0<T<\Tmax$, the functions $u,\nabla u,D^2u,u_t$ are bounded and H\"older continuous on $[t_0,T]\times\overline\Omega$, and $u$ is a classical solution of $u_t-\Delta u=u^p-\mu|\nabla u|^q$ in $(0,\Tmax)\times\Omega$.  
 Moreover, 
\be{HigherReg}
  \nabla u\in C^{2,1}\bigl((0,\Tmax)\times\Omega\bigr).
\ee
\item If $u_{0,n}\to u_0$ in $X_+$ and $T<\Tmax(u_0)$, then $\Tmax(u_{0,n})>T$ for $n$ large, and the corresponding solutions satisfy $u_n\to u$ in $C([0,T];X_+)$.
\end{enumerate}
\end{proposition}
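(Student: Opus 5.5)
This is the standard local theory, and I would prove it by a contraction argument on the Duhamel formulation \eqref{mild} in the space $C([0,\tau];BC^1(\overline\Omega))$, followed by parabolic regularity.

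\emph{Step 1: existence and uniqueness.} For $u_0\in X_+$ with $\norm{u_0}_{C^1}\le M$, I would work in the ball of radius $2M$ of $E_\tau:=L^\infty(0,\tau;BC^1)$ and use the Dirichlet semigroup estimates
\[
\norm{S_D(t)f}_\infty\le\norm{f}_\infty,\qquad \norm{\nabla S_D(t)f}_\infty\le C t^{-1/2}\norm{f}_\infty,\qquad \norm{S_D(t)\phi}_{C^1}\le C\norm{\phi}_{C^1},
\]
the last for $\phi\in X_+$. These estimates are valid for uniformly smooth $\Omega$, with $t\le1$. The nonlinearity $v\mapsto |v|^p-\mu|\nabla v|^q$ is locally Lipschitz from $BC^1$ to $BC$, because $p,q>1$. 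The Duhamel map is therefore a contraction for $\tau=\tau(M)$ small, as in the proof of Proposition~\ref{lem:C0}. Uniqueness in $C([0,T];BC^1)$ follows by the same Gronwall-type estimate with the singular kernel $(t-s)^{-1/2}$.

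\emph{Step 2: maximal time and the limit in \eqref{blowupalt}.} Continuity in time with values in $X_+$ comes from strong continuity of $S_D$ on $X_+$. Nonnegativity I would obtain by comparison with $0$, using the form $u_t-\Delta u+\mu|\nabla u|^q\ge0$, or by iterating with $|u|^p$ and then noting that $u_-$ vanishes. Patching local solutions defines $\Tmax$. Since the existence time $\tau$ depends only on a bound for $\norm{u(t)}_{C^1}$, $\Tmax<\infty$ forces $\norm{u(t)}_{C^1}\to\infty$ and not merely a limsup: were $\norm{u(t_n)}_{C^1}\le M$ along $t_n\uparrow\Tmax$, one could restart at $t_n$ with uniform $\tau(M)$ and continue past $\Tmax$. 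Strict positivity when $u_0\not\equiv0$ follows from the strong maximum principle applied to $u_t-\Delta u+b(t,x)\cdot\nabla u\ge0$, where $b=\mu|\nabla u|^{q-2}\nabla u$ is bounded. This is linear in $u$ with a bounded drift.

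\emph{Step 3: regularity, (ii).} On $[t_0,T]$ the right-hand side $f=u^p-\mu|\nabla u|^q$ is bounded. From the mild formula and standard $W^{2,1}_r$ estimates, $\nabla u$ is Hölder, so $f$ is Hölder. Schauder theory then gives $u\in C^{2+\beta,1+\beta/2}$ up to the boundary, and $u$ is classical. For \eqref{HigherReg} I would differentiate the equation in $x_i$: $w=\partial_iu$ solves
\[
w_t-\Delta w=pu^{p-1}w-\mu q|\nabla u|^{q-2}\nabla u\cdot\nabla w,
\]
whose coefficients are Hölder since $q>1$. Interior Schauder estimates then give $w\in C^{2+\beta',1+\beta'/2}_{loc}$, hence $\nabla u\in C^{2,1}$ in the interior. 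This step is the main technical obstacle, because $|\xi|^q$ is only $C^{1,q-1}$ near $\xi=0$ when $q<2$. Only Hölder continuity of the coefficient is needed, though, so I would do a bootstrap working with difference quotients first.

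\emph{Step 4: continuous dependence, (iii).} The contraction estimates of Step 1 give local Lipschitz dependence on the data in $C([0,\tau];BC^1)$. Iterating over $[0,T]$ with the uniform bound $\sup_{[0,T]}\norm{u}_{C^1}+1$ yields $\Tmax(u_{0,n})>T$ and $u_n\to u$ in $C([0,T];X_+)$ for $n$ large.
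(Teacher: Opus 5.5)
Your outline is the standard local theory, and that is all the paper relies on: it gives no proof, only references to Quittner--Souplet (Remark 51.11, and p.~238 with Remark 48.3(i) for \eqref{HigherReg}) and to Lunardi. The ingredients you list are the right ones: a Duhamel fixed point in $L^\infty(0,\tau;BC^1)$ using the $t^{-1/2}$ gradient smoothing of $S_D$; a blow-up alternative from a restart time depending only on $\norm{u(t)}_{C^1}$; positivity through the bounded linear drift $\mu|\nabla u|^{q-2}\nabla u$; and a Schauder bootstrap on the $x_i$-differentiated equation, whose coefficients are H\"older because $p,q>1$. Apart from the point below, the sketch is sound. The strong maximum principle in Step~2 needs the classical regularity of Step~3, so those two steps should be swapped.

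One justification is false: $S_D$ is \emph{not} strongly continuous on $X_+$ when $\Omega$ is unbounded. For $\Omega=\RN$ one has $\nabla e^{t\Delta}u_0=e^{t\Delta}\nabla u_0$, and this tends to $\nabla u_0$ uniformly as $t\downarrow0$ if and only if $\nabla u_0$ is uniformly continuous. Since $\norm{\nabla\int_0^tS_D(t-s)f(s)\,ds}_\infty\le Ct^{1/2}\sup_s\norm{f(s)}_\infty$, the same dichotomy holds for $u(t)\to u_0$ in $C^1$. For instance, $u_0(x)=2+\sin(x^2)/\sqrt{1+x^2}$ on $\R$ lies in $X_+$, yet $u(t)\not\to u_0$ in $BC^1$.

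What your argument (or any argument) yields for general $u_0\in X_+$ is $u\in C([0,\Tmax);BC)\cap C((0,\Tmax);BC^1)\cap L^\infty_{loc}([0,\Tmax);BC^1)$. Continuity on $(0,\Tmax)$ comes from restarting at some $t_0>0$, where $\nabla u(t_0)$ is H\"older and hence uniformly continuous. Continuity at $t=0$ in $C^1$ holds exactly when $\nabla u_0\in\BUC$, which is automatic if $\Omega$ is bounded. Accordingly, state uniqueness in the class $L^\infty_{loc}([0,\Tmax);BC^1)$, which is where your contraction actually operates, rather than in $C([0,T];BC^1)$. Read (iii) in the sup-in-time norm, which is what your Lipschitz estimate controls.

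This imprecision is already in the statement itself. It does not affect the later uses in the paper (Lemma~\ref{lem:MP}, Propositions~\ref{cor:comparison} and \ref{cor:consequences}), which need only joint continuity up to $t=0$ and $L^\infty$-in-time bounds in $BC^1$. But you should drop the claim that $S_D$ is strongly continuous on $X_+$.
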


 See, e.g., \cite[Remark 51.11 p.584]{QuittnerSouplet2019}
(and cf.~also \cite{Lun}).
As for the higher regularity \eqref{HigherReg}, it follows from a standard use of Schauder parabolic 
estimates as in, e.g., \cite[p.238 and Remark~48.3(i)]{QuittnerSouplet2019}.

\smallskip

When $\mu\ge 0$, the blow-up alternative can be stengthened
as follows (see \cite{SoupletWeissler1999,BartierSouplet2004} and 
\cite[Theorem 35.1]{QuittnerSouplet2019}).

\begin{proposition}
\label{BUalt}
Under the assumptions of Proposition~\ref{prop:local},
if $\mu\ge 0$ and $\Tmax<\infty$, then
\be{blowupalt2}
 \lim_{t\uparrow\Tmax}\norm{u(t)}_{\infty}=\infty.
\ee
\end{proposition}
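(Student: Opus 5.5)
The plan is to argue by contradiction. Suppose $\Tmax<\infty$ but $\sup_{t<\Tmax}\norm{u(t)}_\infty=:M<\infty$. By the $C^1$ blow-up alternative \eqref{blowupalt} of Proposition~\ref{prop:local}(i), it then suffices to show that $\norm{\nabla u(t)}_\infty$ stays bounded on $[\tau,\Tmax)$ for some $\tau>0$. Once this holds, Proposition~\ref{prop:local}(iii) and the mild formulation \eqref{mild} let us continue the solution in $X_+$ past $\Tmax$, which is the desired contradiction.

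The gradient bound will come from the Bernstein technique, now applied directly to $W=|\nabla u|^2$ rather than to a transformed quantity. By \eqref{HigherReg}, the Bochner identity gives
\[
 W_t-\Delta W-b\cdot\nabla W=2p\,u^{p-1}W-2|D^2u|^2,\qquad b:=-\mu q|\nabla u|^{q-2}\nabla u .
\]
The key structural facts are that the sign $\mu\ge0$ makes the absorption term appear only in the drift $b$, and that the source contributes the linear term $2pu^{p-1}W\le 2pM^{p-1}W$. Dropping $-2|D^2u|^2$, we obtain $W_t-\Delta W-b\cdot\nabla W\le CW$. On $[t_0,T]$ with $T<\Tmax$ the drift is bounded, because $\nabla u$ is bounded there by Proposition~\ref{prop:local}(ii). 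Hence the maximum principle on $\RN$ (Lemma~\ref{lem:MP}, or its Dirichlet analogue) would give $W(t)\le e^{C(t-t_0)}\norm{W(t_0)}_\infty$, uniformly as $T\uparrow\Tmax$.

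\textbf{Main obstacle: the boundary.} For a general domain, $W$ is not controlled on $\partial\Omega$ by the equation alone. This is where the uniform smoothness of $\Omega$ in \eqref{hypOmega} enters. I would first bound $|\partial_\nu u|$ on $\partial\Omega$ via local barriers near each boundary point. Take a supersolution of the form $\phi(d(x))$, with $d$ the distance to the boundary and $\phi$ concave, $\phi(0)=0$ and $\phi\ge M$ at distance $\delta_0$. Since $u^p\le M^p$ and the absorption has the favorable sign, the function
\[
 \phi(d)=M\,\frac{1-e^{-kd}}{1-e^{-k\delta_0}},\qquad k\ \text{large},
\]
dominates $u$ in the boundary strip $\{d<\delta_0\}$. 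Here $\delta_0$ depends only on the uniform $C^2$ character of $\Omega$. This yields $|\nabla u|\le C(M)$ on $\partial\Omega$ for $t\in[t_0,\Tmax)$. Combined with the interior maximum principle argument above, it gives $\sup_{[t_0,\Tmax)}\norm{\nabla u(t)}_\infty<\infty$. One technicality is that on unbounded $\Omega$ the maximum principle needs $W$ bounded, and this holds on each $[t_0,T]$ by Proposition~\ref{prop:local}(ii).

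Finally, with $u$ bounded in $C^1$ on $[t_0,\Tmax)$, the $C^1$ norm cannot tend to $\infty$, contradicting \eqref{blowupalt}. Hence $\norm{u(t)}_\infty\to\infty$. Unboundedness alone gives only $\limsup$, so the statement of a full limit needs one more step. For this I would use the ODE comparison: $u\le\psi$ where $\psi'=\psi^p$ and $\psi(t_1)=\norm{u(t_1)}_\infty$, via Proposition~\ref{cor:comparison}. This forces $\norm{u(t_1)}_\infty\ge\bigl((p-1)(\Tmax-t_1)\bigr)^{-1/(p-1)}$ for every $t_1<\Tmax$, since otherwise $u$ would stay bounded up to $\Tmax$. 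The right-hand side tends to $\infty$, so the full limit follows.
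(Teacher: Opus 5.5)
Your strategy is the standard one. The paper gives no proof of this proposition; it only cites Souplet--Weissler, Bartier--Souplet and Quittner--Souplet (Theorem~35.1), where the $L^\infty$ alternative is obtained the same way: the $C^1$ alternative \eqref{blowupalt}, plus a boundary barrier, plus a Bernstein estimate for $|\nabla u|^2$.

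\begin{itemize}
\item Your final ODE-comparison step, upgrading $\limsup$ to $\lim$, is correct. On a general $\Omega$ you need the Dirichlet comparison principle, since Proposition~\ref{cor:comparison} is stated only on $\RN$.
\item The interior identity $W_t-\Delta W-b\cdot\nabla W=2pu^{p-1}W-2|D^2u|^2$ is right. However, it holds for every $\mu\in\R$, so the sign $\mu\ge0$ is not what puts the absorption into the drift. That sign is needed only in the boundary barrier: for $\mu<0$ and $q>2$ the gradient can blow up on $\partial\Omega$ while $u$ stays bounded.
\end{itemize}

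\textbf{The barrier step fails as written.} With $\phi(d)=M(1-e^{-kd})/(1-e^{-k\delta_0})$ one has $-\Delta\phi(d)=\phi'(d)(k-\Delta d)$. Near $d=\delta_0$ this is of order $Mk^2e^{-k\delta_0}$, which tends to $0$ as $k\to\infty$ when $\delta_0$ is fixed by the geometry. So for large $k$ the inequality $-\Delta\phi\ge M^p$ fails near the inner edge of the strip, and $\phi(d)$ is not a supersolution there. You also never check the initial comparison $u(t_0)\le\phi(d)$ in the strip, which ``dominates'' requires.

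\textbf{The fix is to couple the parameters: take $\delta_0=1/k$.} This is admissible once $k$ is large, so that $d$ is $C^2$ with $|\Delta d|\le C_\Omega$ on the strip. Then:
\begin{itemize}
\item For $d\le 1/k$ one has $\phi'(d)\ge Mk/(e-1)$, hence $-\Delta\phi(d)\ge Mk(k-C_\Omega)/(e-1)\ge M^p$ for $k\ge k_0(M,p,\Omega)$.
\item By concavity, $\phi(d)\ge Mkd$ on $[0,1/k]$. Moreover $u(t_0,x)\le\|\nabla u(t_0)\|_\infty\,d(x)$, by integrating along the segment to the nearest boundary point. So the initial comparison holds once $Mk\ge\|\nabla u(t_0)\|_\infty$.
\item On $\{d=1/k\}$ one has $\phi=M\ge u$.
\end{itemize}
Comparison for $v_t-\Delta v+\mu|\nabla v|^q$ in the strip, using $\mu\ge0$ and bounded gradients on each $[t_0,T]$, then gives $0\le u\le\phi(d)$. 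Hence $|\nabla u|=|\partial_\nu u|\le\phi'(0)=Mk/(1-e^{-1})$ on $\partial\Omega$, uniformly for $t\in[t_0,\Tmax)$, and the rest of your argument goes through.
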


\subsection{Comparison principles}

The comparison arguments in this paper reduce to the following lemma.
The penalization by $(1+|x|^2)^{1/2}$ is the standard whole-space device; see \cite{LSU1968,QuittnerSouplet2019}.
We give proofs for the convenience of readers.

\begin{lemma}[Comparison with a spatially homogeneous barrier]
\label{lem:MP}
Let $0\le s<T$.  Let $W\in C([s,T]\times\RN)\cap C^{2,1}((s,T]\times\RN)$ be bounded, let $b:(s,T]\times\RN\to\RN$ be bounded, and let $h\in C([s,T])\cap C^1((s,T])$.  Assume $W(s,\cdot)\le h(s)$ on $\RN$, and that there is $c>0$ such that
\begin{equation}
\label{MP-hyp}
 W_t-\Delta W-b\cdot\nabla W\le h'(t)-c
 \quad\text{at every }(t,x)\in(s,T]\times\RN
 \text{ with }W(t,x)>h(t).
\end{equation}
Then $W\le h$ in $[s,T]\times\RN$.
\end{lemma}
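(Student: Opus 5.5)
We argue by contradiction, penalizing $W-h$ at spatial infinity so that a maximum is attained. Fix $\eps>0$ and set
\[
 \Psi_\eps(t,x):=W(t,x)-h(t)-\eps\,\theta(x)-\eps t,
 \qquad \theta(x):=(1+|x|^2)^{1/2}.
\]
The function $\theta$ is smooth, with $|\nabla\theta|\le1$ and $|\Delta\theta|\le N$. Hence, with $\|b\|_\infty=:\beta$,
\[
 -\Delta(\eps\theta)-b\cdot\nabla(\eps\theta)\ge -\eps(N+\beta).
\]
The goal is to show that $\Psi_\eps\le0$ on $[s,T]\times\RN$ for all small $\eps$. Letting $\eps\to0$ then gives $W\le h$.

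Suppose instead that $\sup\Psi_\eps>0$ for some small $\eps$. Because $W$ is bounded, $h$ is continuous on the compact interval $[s,T]$, and $\theta\to\infty$, the set $\{\Psi_\eps\ge \frac12\sup\Psi_\eps\}$ lies in $[s,T]\times\{|x|\le R_\eps\}$. The continuous function $\Psi_\eps$ therefore attains a positive maximum at some $(t_0,x_0)$.

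At $t=s$ we have $\Psi_\eps(s,\cdot)\le W(s,\cdot)-h(s)\le0$, so $t_0\in(s,T]$. At this maximum point:
\begin{itemize}
\item the spatial maximum gives $\nabla W=\eps\nabla\theta$ and $\Delta W\le\eps\Delta\theta$;
\item the temporal maximum, which may be one-sided if $t_0=T$, gives $\partial_t\Psi_\eps\ge0$, that is, $W_t\ge h'(t_0)+\eps$.
\end{itemize}
Moreover $W(t_0,x_0)>h(t_0)$, since $\Psi_\eps>0$ there. Hence \eqref{MP-hyp} applies at $(t_0,x_0)$.

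Combining these facts,
\[
 h'(t_0)-c\ \ge\ W_t-\Delta W-b\cdot\nabla W\ \ge\ h'(t_0)+\eps-\eps\Delta\theta-\eps\, b\cdot\nabla\theta\ \ge\ h'(t_0)+\eps-\eps(N+\beta).
\]
This forces $c\le\eps(N+\beta)$. Choosing $\eps<c/(N+\beta)$ gives a contradiction, so $\Psi_\eps\le0$ for all such $\eps$, and the lemma follows.

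The only delicate point is that the maximum is attained. This requires boundedness of $W$ and the continuity of $h$ up to $t=s$; near $t=s$ only $h\in C([s,T])$ is available, which is why the case $t_0=s$ is excluded using the initial condition rather than any derivative of $h$. The strict margin $c>0$ is what absorbs the penalization error.
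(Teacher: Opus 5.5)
Your proof is correct and follows essentially the same route as the paper: you penalize $W-h$ by a small multiple of $(1+|x|^2)^{1/2}$, use a positive maximum point (necessarily with $t>s$ and $W>h$) to invoke \eqref{MP-hyp}, and let the margin $c$ absorb the error $\eps(N+\sup|b|)$. The only cosmetic difference is that you take a global space-time maximum, where your extra $-\eps t$ term is harmless but unnecessary. The paper instead locates the first time at which $\max_x\Psi$ reaches zero; both versions yield $\Psi_t\ge0$ at the contact point.
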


\begin{proof}
Let $b_\infty:=\sup|b|$ and $\rho(x):=(1+|x|^2)^{1/2}$, so that $|\nabla\rho|\le1$ and
$
 \Delta\rho=\frac N\rho-\frac{|x|^2}{\rho^3}\le N .
$
Fix $\kappa\in\bigl(0,c/(N+b_\infty)\bigr)$ and set
\[
 \Psi:=W-h-\kappa\rho .
\]
Let $C_1:=\sup|W|+\max_{[s,T]}|h|$, and let $\mathcal K_\kappa:=\{x:\kappa\rho(x)\le C_1+1\}$, a compact set.  Then $\Psi<-1$ outside $[s,T]\times\mathcal K_\kappa$, so any nonnegative value of $\Psi$ is attained on the compact set $\mathcal K_\kappa$.

The function $m(t):=\max_{x\in\mathcal K_\kappa}\Psi(t,x)$ is continuous on $[s,T]$, and $m(s)\le-\kappa<0$.  Suppose that $\Psi>0$ somewhere.  Then
$
 t_1:=\min\{t\in[s,T]:\ m(t)\ge0\}\in(s,T]
$
is well defined, $m(t_1)=0$, and $m<0$ on $[s,t_1)$.  Pick $x_1\in\mathcal K_\kappa$ with $\Psi(t_1,x_1)=0$.

Then $x_1$ is a global maximum point of $\Psi(t_1,\cdot)$, so $\nabla\Psi=0$ and $\Delta\Psi\le0$ at $(t_1,x_1)$; moreover $\Psi(t,x_1)<0$ for $t<t_1$, whence $\Psi_t(t_1,x_1)\ge0$.  At $(t_1,x_1)$ we also have $W=h+\kappa\rho>h$, so \eqref{MP-hyp} applies and gives
\[
 h'-c
 \ \ge\ W_t-\Delta W-b\cdot\nabla W
 =(h'+\Psi_t)-(\Delta\Psi+\kappa\Delta\rho)
 -b\cdot(\nabla\Psi+\kappa\nabla\rho)
 \ \ge\ h'-\kappa(N+b_\infty),
\]
that is, $c\le\kappa(N+b_\infty)$, a contradiction.  Thus $\Psi\le0$, and letting $\kappa\downarrow0$ gives $W\le h$.
\end{proof}

For $T>0$ we write
\[
 \mathcal S_T:=C([0,T];BC^1(\RN))\cap C^{2,1}((0,T]\times\RN),
 \qquad
 P[v]:=v_t-\Delta v-v_+^p+\mu|\nabla v|^q .
\]

\begin{proposition}[Comparison principle]
\label{cor:comparison}
Let $u_1,u_2\in\mathcal S_T$ satisfy  $P[u_1] 
\le P[u_2]$ in $(0,T]\times\RN$ and $u_1(0)\le u_2(0)$.  Then $u_1\le u_2$ in $[0,T]\times\RN$.
\end{proposition}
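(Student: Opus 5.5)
The plan is to reduce the statement to Lemma~\ref{lem:MP}, applied to the difference $W:=u_1-u_2$, after writing the difference of the nonlinear terms as a bounded coefficient times $W$ plus a bounded drift acting on $\nabla W$. I first subtract the two inequalities, which gives
\[
 W_t-\Delta W\le \bigl((u_1)_+^p-(u_2)_+^p\bigr)-\mu\bigl(|\nabla u_1|^q-|\nabla u_2|^q\bigr).
\]
Since $u_i\in\mathcal S_T$, both $u_i$ and $\nabla u_i$ are bounded on $[0,T]\times\RN$. Because $s\mapsto s_+^p$ is $C^1$ ($p>1$), the mean value theorem gives $(u_1)_+^p-(u_2)_+^p=c(t,x)W$ with $0\le c\le C_1$. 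In the same way, $\xi\mapsto|\xi|^q$ is $C^1$ ($q>1$), so $|\nabla u_1|^q-|\nabla u_2|^q=\beta(t,x)\cdot\nabla W$ with
\[
 \beta=\int_0^1 q|\xi_\theta|^{q-2}\xi_\theta\,d\theta,\qquad \xi_\theta=\theta\nabla u_1+(1-\theta)\nabla u_2,
\]
and $\beta$ is bounded. Hence $W_t-\Delta W+\mu\beta\cdot\nabla W\le cW\le C_1W_+$.

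To absorb the zeroth-order term I rescale. Set $\tilde W:=e^{-(C_1+1)t}W$, so that at points where $\tilde W>0$,
\[
 \tilde W_t-\Delta\tilde W-b\cdot\nabla\tilde W\le -\tilde W,\qquad b:=-\mu\beta.
\]
Here $\tilde W$ is bounded, continuous on $[0,T]\times\RN$ and $C^{2,1}$ on $(0,T]\times\RN$, and $\tilde W(0)\le0$.

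The remaining issue is that Lemma~\ref{lem:MP} needs a strictly negative right-hand side $h'-c$, whereas here we only get $\le -\tilde W$, which is not bounded away from zero near $\tilde W=0^+$. I would handle this with $h(t)=\eps(1+t)$ for fixed $\eps>0$. At points where $\tilde W>h(t)\ge\eps$, the right-hand side is $-\tilde W<-\eps$. The hypothesis of Lemma~\ref{lem:MP} asks for the right-hand side to be at most $h'-c=\eps-c$. Taking $h'=\eps$ would force $c\le\eps-(-\eps)$, i.e. we need $-\tilde W\le \eps-c$, which holds with $c=2\eps$. So the choice $h(t)=\eps(1+t)$ together with $c=2\eps$ works.

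Lemma~\ref{lem:MP} therefore yields $\tilde W\le\eps(1+T)$ on $[0,T]\times\RN$, and letting $\eps\downarrow0$ gives $u_1\le u_2$.

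The main obstacle is only ensuring the boundedness of $b$ and $c$ and the required regularity of $\tilde W$. The boundedness of the coefficients follows from the $BC^1$ bounds on $u_i$ uniformly on $[0,T]$. The continuity up to $t=0$ is part of the definition of $\mathcal S_T$. No sign condition on $\mu$ is needed, since $b$ merely enters as a bounded drift.
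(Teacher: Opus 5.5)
Your proposal is correct and follows essentially the same route as the paper: you linearize $s\mapsto s_+^p$ and $\xi\mapsto|\xi|^q$ by the mean value theorem, rescale exponentially in time, and apply Lemma~\ref{lem:MP} with the barrier $h(t)=\eps(1+t)$ before letting $\eps\downarrow0$. The only difference is cosmetic. The paper takes $\Lambda\ge\sup c_1$, so the right-hand side is $\le0=h'-\eps$ and the margin is $c=\eps$. Your extra $+1$ in the exponent gives $\le-\tilde W<-\eps$, and you use the margin $c=2\eps$; both choices work.
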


\begin{proof}
Let $w:=u_1-u_2$.  By the mean value theorem applied to $s\mapsto s_+^p$ and $\xi\mapsto|\xi|^q$,
we have $w_t-\Delta w\le c_1w+\beta\cdot\nabla w$, 
where
\[
 c_1:=\int_0^1p\bigl(\lambda u_1+(1-\lambda)u_2\bigr)_+^{p-1}\,d\lambda,
 \quad\ 
 \beta:=-\mu q\int_0^1|\xi_\lambda|^{q-2}\xi_\lambda\,d\lambda,
 \quad\ 
 \xi_\lambda:=\lambda\nabla u_1+(1-\lambda)\nabla u_2 .
\]
Both $c_1, \beta$ are bounded; let $\Lambda\ge\sup c_1$ and $\varphi:=e^{-\Lambda t}w$, so that
$
 \varphi_t-\Delta\varphi-\beta\cdot\nabla\varphi\le(c_1-\Lambda)\varphi .
$
Fix $\eps>0$ and set $h(t):=\eps(1+t)$.  At points where $\varphi>h>0$ the right-hand side is $\le0=h'-\eps$.  Since $\varphi(0)\le0<h(0)$, Lemma~\ref{lem:MP}, with margin $\eps$, gives $\varphi\le\eps(1+t)$.  Letting $\eps\downarrow0$ gives $w\le0$.
\end{proof}

\begin{proposition}[Linear comparison]
\label{cor:linear}
Let $0\le s<T$ and $C\ge0$.  Let $V\in C([s,T]\times\RN)\cap C^{2,1}((s,T]\times\RN)$ be bounded, with $V(s,\cdot)\in \BUC(\RN)$ and
\[
 V_t-\Delta V\le CV
 \qquad\text{in }(s,T]\times\RN .
\]
Then
\[
 V(t)\le e^{C(t-s)}e^{(t-s)\Delta}V(s),
 \qquad s\le t\le T .
\]
\end{proposition}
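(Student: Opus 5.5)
The plan is to reduce the statement to Lemma~\ref{lem:MP}, applied to $\varphi := e^{-C(t-s)}\bigl(V - e^{C(t-s)}e^{(t-s)\Delta}V(s)\bigr)$. Equivalently, set $\Theta(t):=e^{(t-s)\Delta}V(s)$ and $\varphi(t):=e^{-C(t-s)}V(t)-\Theta(t)$.

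\textbf{Regularity of the heat term.} Since $V(s)\in \BUC(\RN)$, the map $t\mapsto e^{(t-s)\Delta}V(s)$ is continuous from $[s,T]$ into $\BUC(\RN)$; this is where $\BUC$ is used, as opposed to mere $BC$. Moreover $\Theta$ is smooth for $t>s$ and solves $\Theta_t=\Delta\Theta$ there. Hence $\varphi\in C([s,T]\times\RN)\cap C^{2,1}((s,T]\times\RN)$. Also $\varphi$ is bounded, because $V$ is bounded and $\|\Theta(t)\|_\infty\le\|V(s)\|_\infty$.

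\textbf{Differential inequality.} A direct computation gives
\[
\varphi_t-\Delta\varphi = e^{-C(t-s)}\bigl(V_t-\Delta V-CV\bigr)\le 0 \quad\text{in }(s,T]\times\RN,
\]
with $\varphi(s,\cdot)=0$.

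\textbf{Applying the lemma.} Fix $\eps>0$ and put $h(t):=\eps(1+t-s)$, so that $h'=\eps$. At every point, in particular wherever $\varphi>h$, we have
\[
\varphi_t-\Delta\varphi\le 0 = h'-\eps .
\]
Lemma~\ref{lem:MP} then applies with $b\equiv 0$, $c=\eps$, and the bounded function $\varphi$ in place of $W$; the initial condition holds since $\varphi(s)=0<h(s)$. It yields $\varphi\le\eps(1+t-s)$ on $[s,T]\times\RN$. Letting $\eps\downarrow0$ gives $\varphi\le 0$, i.e.
\[
V(t)\le e^{C(t-s)}e^{(t-s)\Delta}V(s).
\]

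\textbf{Main obstacle.} The only delicate point is continuity of $\varphi$ up to $t=s$, uniformly enough for Lemma~\ref{lem:MP}, which requires $\varphi\in C([s,T]\times\RN)$ and $W(s)\le h(s)$. Pointwise joint continuity of $\Theta$ at $t=s$ follows from uniform continuity of $V(s)$. The compactness argument inside Lemma~\ref{lem:MP} only uses continuity of $\varphi$ on $[s,T]\times\mathcal K_\kappa$, which is then available. So this step should go through by the standard estimate
\[
\|e^{\tau\Delta}f-f\|_\infty\to0 \quad\text{as }\tau\downarrow0, \qquad f\in\BUC(\RN).
\]
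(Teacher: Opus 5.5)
Your proposal is correct and is essentially the paper's proof. It uses the same function $\varphi=e^{-C(t-s)}V-e^{(t-s)\Delta}V(s)$, the same identity $\varphi_t-\Delta\varphi=e^{-C(t-s)}(V_t-\Delta V-CV)\le0$ with $\varphi(s)=0$, and the same application of Lemma~\ref{lem:MP} with $b=0$, $h(t)=\eps(1+t-s)$ and margin $\eps$, followed by $\eps\downarrow0$.
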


\begin{proof}
Let $\varphi(t):=e^{-C(t-s)}V(t)-e^{(t-s)\Delta}V(s)$.  Since $V(s)\in \BUC(\RN)$, $\varphi$ is bounded and lies in $C([s,T]\times\RN)\cap C^{2,1}((s,T]\times\RN)$; moreover $\varphi(s)=0$ and
\[
 \varphi_t-\Delta\varphi
 =e^{-C(t-s)}\bigl(V_t-\Delta V-CV\bigr)\le0 .
\]
Lemma~\ref{lem:MP} with $b=0$, $h(t)=\eps(1+t-s)$ and margin $\eps$ gives $\varphi\le\eps(1+t-s)$ for every $\eps>0$.
\end{proof}

\begin{proposition}  \emph{(Positive floor.)} 
\label{cor:consequences}
Let $u_0\in X_+$, and let $u$ be given by Proposition~\ref{prop:local}.
Let $T<\Tmax$ and, for $\eta>0$, let $u_\eta$ denote the solution with initial datum $u_0+\eta$.  There is $\eta_0>0$ such that, for $0<\eta\le\eta_0$,
\[
 \Tmax(u_0+\eta)>T,
 \qquad
 u_\eta\ge\eta\ \text{ in }[0,T]\times\RN,
 \qquad
 u_\eta\to u\ \text{ in }C([0,T];BC^1(\RN))
 \ \text{ as }\eta\downarrow0 .
\]
\end{proposition}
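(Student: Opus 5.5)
We prove the three assertions of Proposition~\ref{cor:consequences} in turn, using the continuous dependence in Proposition~\ref{prop:local}(iii) and the comparison principle Proposition~\ref{cor:comparison}. Note first that $u_0+\eta$ does not belong to $X_+$ in the strict sense of the definition only through the boundary condition, which is void here since $\Omega=\RN$; so $u_0+\eta\in X_+$. Moreover $u_0+\eta\to u_0$ in $BC^1(\RN)$ as $\eta\downarrow0$, because $\|(u_0+\eta)-u_0\|_{C^1}=\eta$.

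\emph{Existence beyond $T$ and convergence.} We apply Proposition~\ref{prop:local}(iii) along an arbitrary sequence $\eta_n\downarrow0$. It gives $\Tmax(u_0+\eta_n)>T$ for $n$ large and $u_{\eta_n}\to u$ in $C([0,T];X_+)$. To obtain a threshold $\eta_0$ rather than a sequence-dependent index, we argue by contradiction. If no such $\eta_0$ existed, there would be $\eta_n\downarrow0$ with $\Tmax(u_0+\eta_n)\le T$, contradicting (iii). The same contradiction argument upgrades the sequential convergence to convergence as $\eta\downarrow0$.

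\emph{Positive floor.} Fix $\eta\le\eta_0$ and compare $u_\eta$ with the constant $\eta$ on $[0,T]$. We have $P[\eta]=-\eta^p\le0=P[u_\eta]$, since $u_\eta\ge0$ (so $(u_\eta)_+=u_\eta$). Also $\eta\le u_0+\eta$ at $t=0$. Both functions lie in $\mathcal S_T$: for $u_\eta$ this follows from Proposition~\ref{prop:local}(i)(ii) on $[0,T]\subset[0,\Tmax(u_0+\eta))$. Proposition~\ref{cor:comparison} then yields $u_\eta\ge\eta$ on $[0,T]\times\RN$.

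The only delicate point is the regularity required by the class $\mathcal S_T$ near $t=0$ and at $t=T$. Continuity in $C([0,T];BC^1)$ comes from (i). The $C^{2,1}$ regularity on $(0,T]$ comes from (ii), applied with some $T'\in(T,\Tmax(u_0+\eta))$.
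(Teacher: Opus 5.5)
Your proposal is correct and follows the paper's proof. Existence beyond $T$ and the convergence both come from continuous dependence, Proposition~\ref{prop:local}(iii); the floor $u_\eta\ge\eta$ comes from Proposition~\ref{cor:comparison} applied to the constant $\eta$, which satisfies $P[\eta]=-\eta^p\le0=P[u_\eta]$. Your extra details, namely the contradiction argument turning the sequential statement into a threshold $\eta_0$ and the check that both $\eta$ and $u_\eta$ lie in $\mathcal S_T$, are accurate and make explicit what the paper leaves implicit.
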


\begin{proof}
The first and third assertions follow from Proposition~\ref{prop:local}(iii).  The constant function $\eta$ belongs to $\mathcal S_T$ and satisfies $P[\eta]=-\eta^p\le0$, so Proposition~\ref{cor:comparison} gives $\eta\le u_\eta$.
\end{proof}

{\bf Declarations.} The authors report that there are no competing interests to declare. This manuscript has no associated data.

\smallskip

{\bf Declaration on the use of generative AI.}
The authors used OpenAI's ChatGPT for language editing, manuscript presentation, and checking mathematical arguments. All AI-assisted content was independently verified, and the authors take full responsibility for the manuscript.

\small

\noindent
\textsc{Philippe Souplet}\\
Universit\'e Sorbonne Paris Nord, CNRS UMR 7539, \\Laboratoire Analyse, G\'eom\'etrie et 
Applications, 93430 Villetaneuse, France.
\\ E-mail: \texttt{souplet@math.univ-paris13.fr}\\{}\\
\textsc{Berikbol T. Torebek}\\
Institute of
Mathematics and Mathematical Modeling
\\Shevchenko street 28, 050010 Almaty, Kazakhstan\\ and \\
SDU University \\ Abylai Khan street 1/1, 040900 Kaskelen, Kazakhstan.
\\ E-mail: \texttt{torebek@math.kz}
\end{document}